\title{The max flow/min cut theorem for currents and laminations}
\author{Aidan Backus}
\address{Department of Mathematics, Brown University}
\email{aidan\_backus@brown.edu}
\date{\today}
\keywords{}
\thanks{}
\subjclass[2020]{primary: 49Q05; secondary: 35J92}
\newcommand{\QQ}{\mathbf{Q}}
\newcommand{\RR}{\mathbf{R}}
\newcommand{\CC}{\mathbf{C}}
\newcommand{\PP}{\mathbf P}
\newcommand{\Sph}{\mathbf S}
\newcommand*\dif{\mathop{}\!\mathrm{d}}
\DeclareMathOperator{\area}{area}
\DeclareMathOperator{\card}{card}
\DeclareMathOperator{\Div}{div}
\DeclareMathOperator{\flux}{flux}
\DeclareMathOperator*{\esssup}{ess\,sup}
\DeclareMathOperator{\Hom}{Hom}
\DeclareMathOperator{\mass}{mass}
\DeclareMathOperator{\supp}{supp}
\newcommand{\weakto}{\rightharpoonup}
\newcommand{\restr}{\upharpoonright}
\newcommand{\normal}{\mathbf n}
\newcommand{\vol}{\mathrm{vol}}
\newcommand{\len}{\mathrm{len}}
\newcommand{\dfn}[1]{\emph{#1}\index{#1}}
\newcommand{\cl}{\mathrm{cl}}
\newcommand{\loc}{\mathrm{loc}}
\newcommand{\cpt}{\mathrm{cpt}}
\newtheorem{theorem}{Theorem}[section]
\newtheorem{lemma}[theorem]{Lemma}
\newtheorem{corollary}[theorem]{Corollary}
\theoremstyle{definition}
\newtheorem{definition}[theorem]{Definition}
\newtheorem{construction}[theorem]{Construction}
\newlist{prfenum}{enumerate}{1}
\setlist[prfenum]{
  nosep,
  label=(\theprfenum--\arabic*),
}
\newcounter{prfenum}
\numberwithin{equation}{section}
\def\XXint#1#2#3{{\setbox0=\hbox{$#1{#2#3}{\int}$ }
\vcenter{\hbox{$#2#3$ }}\kern-.6\wd0}}
\begin{document}
\begin{abstract}
Motivated by applications to holography and Teichm\"uller theory, we prove a continuous analogue of the max flow/min cut theorem which also takes the topology of the domain into account.
\end{abstract}

\maketitle


\section{Introduction}
A fundamental theorem in combinatorial optimization is the \dfn{max flow/min cut theorem}, which asserts that the maximal possible amount of flow through a network is equal to the capacity of the smallest bottleneck.
More precisely, a \dfn{flow network} $(V, E, s_0, s_1)$ is a finite directed graph $(V, E)$, equipped with a \dfn{source} $s_0 \in V$ and a \dfn{sink} $s_1 \in V$, such that there is a path from $s_0$ to $s_1$.
A \dfn{flow} is a function $F: E \to [-1, 1]$ which satisfies the \dfn{conservation law} that for every $v \in V \setminus \{s_0, s_1\}$, the total flow into $v$ equals the total flow out of $v$:
$$\sum_{(u, v) \in E} F(u, v) = \sum_{(v, w) \in E} F(v, w).$$
A \dfn{cut} is a partition $V = S_0 \sqcup S_1$ with $s_i \in S_i$; we identify a cut with the set $C \subseteq E$ of edges from $S_0$ to $S_1$.
Thus in the graph $(V, E \setminus C)$, there is no path from $s_0$ to $s_1$.
It follows from the conservation law that 
\begin{equation}\label{eqn: discrete one sided MFMC}
\sum_{(s_0, v) \in E} F(s_0, v) \leq \min_{C^*} \card C^*
\end{equation}
where the minimum ranges over cuts and $\card C$ is the cardinality of the set $C$.
The max flow/min cut theorem asserts that this inequality is sharp:

\begin{theorem}[{\cite[Theorem 6.2]{papadimitriou1982combinatorial}}]\label{thm: MFMC discrete}
In every flow network $(V, E, s_0, s_1)$,
$$\max_{F^*} \sum_{(s_0, v) \in E} F^*(s_0, v) = \min_{C^*} \card C^*,$$
where the maximum ranges over flows and the minimum ranges over cuts.
\end{theorem}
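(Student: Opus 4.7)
The plan is to prove this by the classical augmenting-path construction of Ford and Fulkerson, which simultaneously produces an optimal flow and a matching minimum cut.

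Associate to each flow $F$ a \emph{residual graph} $G_F$: for every $(u,v) \in E$, include a forward arc $u \to v$ of residual capacity $1 - F(u,v)$ and a reverse arc $v \to u$ of residual capacity $1 + F(u,v)$, discarding zero-capacity arcs. An \emph{augmenting path} is a directed $s_0 \to s_1$ walk $P$ in $G_F$; given such a walk with minimum residual capacity $\delta > 0$, update $F$ by adding $\delta$ whenever $P$ traverses an edge in the direction of $E$ and subtracting $\delta$ otherwise. This update preserves the conservation law --- at every interior vertex of $P$, the entering and exiting adjustments balance --- and strictly increases the objective $\sum_{(s_0, v) \in E} F(s_0, v)$ by $\delta$.

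Starting from $F \equiv 0$ and iterating keeps $F$ integer-valued, since residual capacities then remain in $\{1, 2\}$ and hence $\delta \geq 1$ at every step. As the objective is bounded above by $\min_{C^*} \card C^*$ via \eqref{eqn: discrete one sided MFMC}, the procedure halts at some flow $F^*$ admitting no augmenting $s_0 \to s_1$ path. Let $S_0$ denote the set of vertices reachable from $s_0$ in $G_{F^*}$, set $S_1 := V \setminus S_0$, and $C^* := \{(u,v) \in E : u \in S_0, v \in S_1\}$; then $s_1 \in S_1$, so $C^*$ is a cut. By maximality of $S_0$, every $(u, v) \in C^*$ satisfies $F^*(u, v) = 1$, and every $(v, u) \in E$ with $v \in S_1$, $u \in S_0$ satisfies $F^*(v, u) = -1$. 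Summing the conservation law over $v \in S_0$ telescopes to yield
$$\sum_{(s_0, v) \in E} F^*(s_0, v) = \card C^*,$$
which, combined with \eqref{eqn: discrete one sided MFMC}, completes the proof.

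The one delicate ingredient is the termination of the augmentation procedure: here it is automatic from integrality of the capacities, but in the continuous setting it becomes genuinely subtle --- Ford and Fulkerson's classical example shows the naive algorithm need not terminate for irrational capacities --- and I anticipate this to be the main obstacle requiring new ideas in the continuous analogue pursued in the rest of the paper.
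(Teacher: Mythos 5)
The paper does not prove Theorem \ref{thm: MFMC discrete}; it is quoted from \cite{papadimitriou1982combinatorial}, so there is no in-paper argument to compare against. Your Ford--Fulkerson augmenting-path scheme is the standard route, and the construction of $S_0$ from the residual graph is correct; but the closing telescope is not. Summing conservation over $S_0 \setminus \{s_0\}$ gives the \emph{net} flow out of $s_0$, which (by your own observation that forward cut edges carry $+1$ and backward cut edges carry $-1$) equals $\card C^* + \card\{(v,u)\in E : v\in S_1,\, u\in S_0\}$, not $\card C^*$; and the theorem's objective is the gross out-flow $\sum_{(s_0,v)\in E} F^*(s_0,v)$, which differs from the net out-flow by the in-flow into $s_0$. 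You should also say ``simple path'' rather than ``walk,'' since augmenting along a walk that reuses an edge could violate capacity.

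The deeper problem is that the paper's formulation, with $F$ valued in $[-1,1]$ but cuts counting only edges from $S_0$ to $S_1$, makes inequality \eqref{eqn: discrete one sided MFMC} --- on which your final step leans --- actually false as stated. For example, take $V = \{s_0,a,b,s_1\}$, $E = \{(s_0,a),(s_0,b),(a,s_1),(s_1,b)\}$, and $F(s_0,a)=F(s_0,b)=F(a,s_1)=1$, $F(s_1,b)=-1$: this $F$ is conservative, the objective is $2$, yet the cut with $S_0=\{s_0,b\}$ has $\card C = 1$. This is a wrinkle in the paper's translation of the classical theorem to signed unit flows, not a defect of Ford--Fulkerson. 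If one takes $F: E \to [0,1]$, or (in keeping with the continuous analogue, where a cut's capacity is its area and is orientation-blind) redefines the cut to include all edges crossing the partition in either direction, then your argument, telescoping included, goes through.
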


The max flow/min cut theorem is useful enough that in the literature, one often sees allusions to a continuous analogue of it. 
Such allusions appear in such diverse fields as computational geometry \cite{sullivan1990crystalline}, Teichm\"uller theory \cite{Thurston98}, and holography \cite{Freedman_2016}.
A continuous theorem is available on bounded Lipschitz domains in euclidean space \cite{Strang1983, Nozawa90}, and in two dimensions this is essentially equivalent to Kantorovich duality for optimal transport \cite{Dweik2019}.
There is also a variant of the max flow/min cut theorem for $L^1$ Rayleigh quotients with applications to spectral geometry \cite{Grieser05}.
These theorems all neglect the role of the topology of the domain, but in \cite{Thurston98,Freedman_2016}, the topology has an essential role.
We therefore seek to formulate a continuous max flow/min cut theorem which takes the topology of the domain into account.
We refer to \cite[Figure 5]{Freedman_2016} for an elegant depiction of what we have in mind.

We shall formulate the continuous max flow/min cut theorem on a Riemannian manifold $M$ of dimension $d \geq 1$.
The manifold shall be compact (corresponding to finiteness), connected (corresponding to a path from source to sink), oriented (corresponding to directedness), and with boundary $\partial M$ (corresponding to the source and sink).
We write $\overline M := M \cup \partial M$.

Let $\alpha \in H_{d - 1}((M, \partial M), \RR)$, and let $S \subset \partial M$ be a $d - 2$-cycle with real coefficients such that $[S]$ is the image of $\alpha$ under the connecting homomorphism 
\begin{equation}\label{eqn: homology compatibility}
\begin{tikzcd}
	{H_{d - 1}((M, \partial M), \RR)} & {H_{d - 2}(\partial M, \RR)}
	\arrow["\partial", from=1-1, to=1-2].
\end{tikzcd}
\end{equation}
Tentatively we think of cuts as corresponding to $d - 1$-chains $C \subset \overline M$ with real coefficients such that $[C] = \alpha$ and $\partial C = S$, and such that the Hausdorff measure, $\mathcal H^{d - 1}(C \cap \partial M)$, is zero.
Roughly speaking, these conditions imply that $C$ divides $\partial M$ into two regions, a ``source'' and a ``sink.''

A flow will be a vector field $F$ with $\|F\|_{L^\infty} \leq 1$, which satisfies the ``conservation law'' that $\Div F = 0$.
Since $[C] = \alpha$ and $\partial S = C$, and we're working with \emph{real} homology, $\int_C \flux F$ is independent of $C$.\footnote{Equivalently, one can think of $F$ as a \dfn{calibration} of codimension $1$, see \cite{Harvey82}.}
Analogously to (\ref{eqn: discrete one sided MFMC}), if there exists a cut $C^*$ which minimizes its area among all cuts, then
\begin{equation}\label{eqn: one sided MFMC}
\int_C \flux F \leq \area(C^*).
\end{equation}
Motivated by Theorem \ref{thm: MFMC discrete}, we shall show in Theorem \ref{thm: MFMC} that (\ref{eqn: one sided MFMC}) is sharp.

\subsection{Some motivating examples}
\begin{figure}[t]
\includegraphics[width=0.35\textwidth]{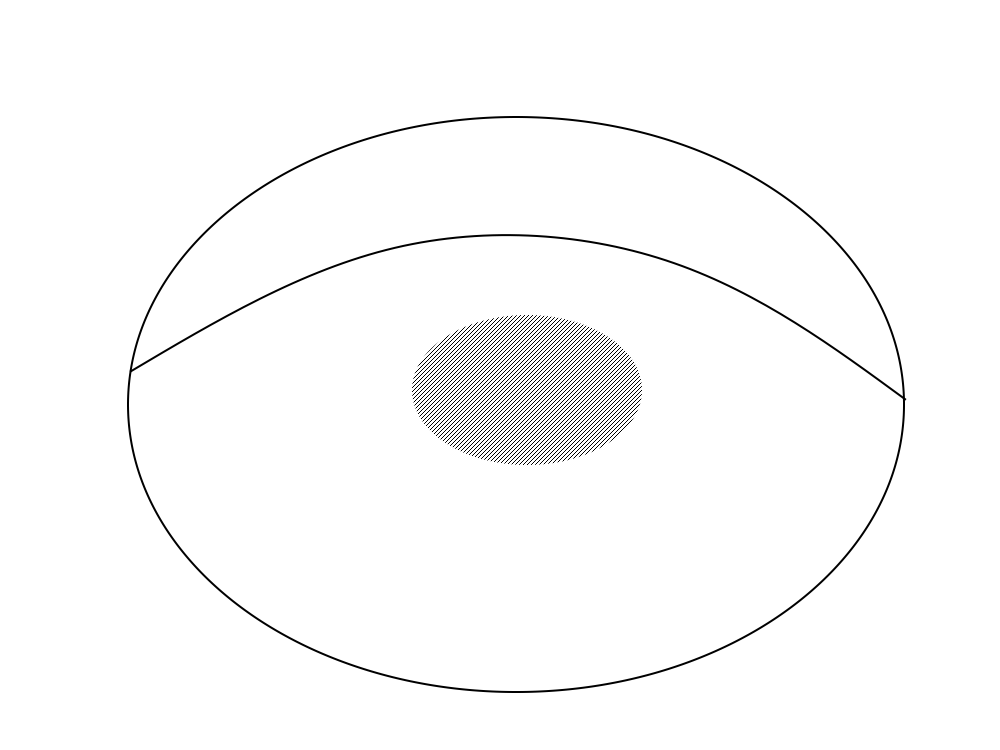} 
\includegraphics[width=0.35\textwidth]{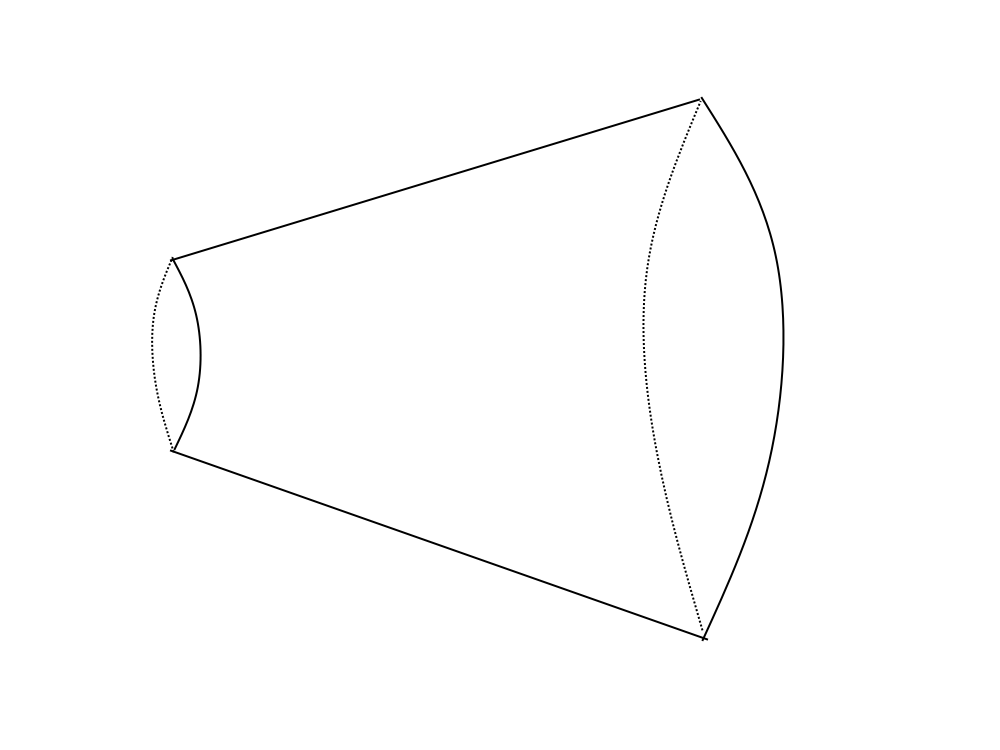}
\centering
\caption{Left: We have to specify a relative homology class to determine how the minimal cut will avoid the ``black hole region'' $\{x \leq 0\}$, shaded in grey. Right: Because the boundary includes a minimizing cycle, there is no minimal cut. \label{fig: basic examples}}
\end{figure}

To explain what we mean by ``taking the topology of the domain into account,'' suppose that $M = (-1, 1)_x \times \Sph^1_\theta$ with the metric $\dif x^2 + \cosh^2 x \dif \theta^2$, let $S = p - q$, where $p, q \in \partial M$.
Then any geodesic $C^*$ such that $\partial C^* = S$ must remain in the region $\{0 < x < 1\}$, but there are two such geodesics of that form, corresponding to $\pm \alpha$ where $\alpha$ is a unit generator of $H_1((M, \partial M), \RR)$, see Figure \ref{fig: basic examples}, left.
So it is not enough to split $\{x = 1\}$ along $S$ into a source and a sink; one actually has to choose the class $\alpha \in H_1((M, \partial M), \RR)$ of a cut, and in order that a cut actually exist, we must have the compatibility condition $[S] = \partial \alpha$.
The class $\alpha$ is natural in view of the applications: in holography, barriers similar to $\{x = 0\}$ can be interpreted as event horizons, and in Teichm\"uller theory, one typically has $\partial M = \emptyset$, so $C^*$ is completely determined by $\alpha$.

The max flow/min cut theorem fails in the presence of torsion.
Indeed, if $M = \RR\PP^2$ then $M$ admits exactly one minimizing geodesic $C^*$; if there was a corresponding maximal flow $F^*$ then the lift of $C^*$ to $\Sph^2$ would still be minimizing, a contradiction.
So we must work with coefficients in a field $K$, and the only reasonable options are $K = \RR$ and $K = \QQ$.
In Teichm\"uller theory one is interested in measured geodesic laminations without closed leaves, whose homology classes are irrational; meanwhile, in holography, restricting to rational coefficients seems rather unphysical.
So we are mainly interested in the case that $K = \RR$, but then the minimal cut need not actually be a chain. 
Indeed, if $M$ is the flat square torus and the homology class $\alpha$ has irrational direction, then $C^*$ is actually an irrational foliation.
To avoid this obstruction, we allow cuts to be $d - 1$-currents of finite mass.

The mean curvature of $\partial M$ must enter the picture.
Indeed, suppose that $M = (0, 2)_x \times \Sph^1_\theta$ with the metric $\dif x^2 + \cosh^2 x \dif \theta^2$, let $S$ be trivial, and let $\alpha$ be a unit generator of $H_1((M, \partial M), \RR)$, as in Figure \ref{fig: basic examples}, right.
Then $C^* = \{x = 0\}$, even though we sought to impose $\mathcal H^1(C \cap \partial M) = 0$.
To rule this out, we require that $\partial M$ be strictly mean convex.
In the application to holography, $M$ will typically be a large ball in an asymptotically hyperbolic manifold, and therefore will be strictly mean convex.

Finally, since we have been forced into the category of currents, we must be careful about the boundary regularity.
Suppose that $M$ is the unit disk in $\RR^2$.
Then there is a fat Cantor set in $\partial M$, whose boundary is a $0$-current $S$ such that there is no mass-minimizing current $C$ in $M$ such that $\partial C = S$ \cite{Spradlin14}.
However, $S$ does not have finite mass, so we avoid this issue by requiring that $S$ have finite mass.

\subsection{Statement of the main theorem}
Let $\mathcal F$ be the set of all measurable vector fields $F$ on $M$ such that $\|F\|_{L^\infty} \leq 1$ and $\Div F = 0$.
See \S\ref{sec: Lefschetz section} for a review of currents on manifolds-with-boundary.

\begin{theorem}\label{thm: MFMC}
Suppose that $\partial M$ is strictly mean convex.
Let $\alpha \in H_{d - 1}((M, \partial M), \RR)$ and let $S$ be a closed $d - 2$-current of finite mass in $\partial M$ such that $[S] = \partial \alpha$.
Then there exists a $d - 1$-current $C^*$ which minimizes its mass among all currents $C$ such that $\partial C = S$ and $[C] = \alpha$.
Moreover,
\begin{equation}\label{eqn: MFMC}
\mass(C^*) = \max_{F^* \in \mathcal F} \int_{C^*} \flux F^*.
\end{equation}
\end{theorem}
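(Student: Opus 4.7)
The proof separates into existence of the minimizer $C^*$ and the duality identity \eqref{eqn: MFMC}. For existence, I would apply the direct method. Take a minimizing sequence $(C_n)$ of admissible currents with $\partial C_n = S$, $[C_n] = \alpha$, and $\mass(C_n) \to m := \inf \mass$. Boundedness of $\mass(C_n)$ together with $\mass(\partial C_n) = \mass(S) < \infty$ (by hypothesis) lets the Federer--Fleming compactness theorem for normal currents produce a subsequence $C_n \weakto C^*$ of finite mass. Weak continuity of $\partial$ and the closedness of a fixed relative homology class under weak convergence give $\partial C^* = S$ and $[C^*] = \alpha$, and lower semicontinuity of mass yields $\mass(C^*) \leq m$, so $C^*$ achieves the infimum.

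Weak duality (the inequality $\geq$ in \eqref{eqn: MFMC}) is then immediate from the pointwise bound: the flux form $\flux F = \iota_F\, \vol_g$ has comass at most $|F| \leq 1$, so $\int_{C^*} \flux F \leq \mass(C^*)$ for every $F \in \mathcal F$. As already noted above the statement, the value of $\int_C \flux F$ does not depend on which admissible $C$ is used, so evaluating at $C^*$ is legitimate.

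For strong duality I would deploy a Hahn--Banach / Fenchel--Rockafellar argument. Fix any admissible $C_0$ and parametrize admissible competitors as $C = C_0 + \partial R$ for a finite-mass $d$-current $R$ on $\overline M$. Using $\mass(C) = \sup\{\langle C, \omega \rangle : \omega\text{ smooth with comass} \leq 1\}$, the minimal mass becomes
\begin{equation*}
m = \inf_{R}\sup_{\omega} \bigl[\langle C_0, \omega \rangle + \langle R, d\omega\rangle\bigr].
\end{equation*}
A minimax exchange, justified by the convex/concave structure and weak-star compactness of the comass-bounded forms, swaps the order; the inner infimum over $R$ then forces $d\omega = 0$. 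Hence $m$ equals the supremum of $\langle \alpha, [\omega]\rangle$ over closed $(d-1)$-forms of comass at most $1$, and by Hodge duality on the oriented $d$-manifold such $\omega$ are exactly the flux forms $\flux F$ for $F \in \mathcal F$. The extremizer is the desired $F^*$ saturating \eqref{eqn: MFMC}.

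The principal obstacle is making this duality rigorous on a manifold with boundary. Boundary traces of $F^*$ must be controlled, the pairing between the relative class $\alpha$ and the correct cohomological class of $\omega$ identified, and the minimax swap verified in an $L^\infty$-versus-mass pairing that is not reflexive. The strict mean convexity hypothesis on $\partial M$ is exactly the ingredient that resolves these boundary issues: without it, minimizing sequences can collapse onto $\partial M$ (cf.\ Figure \ref{fig: basic examples}, right), and any dual extremizer would be forced to acquire a nontrivial normal trace there, obstructing realization by a genuine $F^* \in \mathcal F$. Mean convexity lets one push competitors slightly inward, or dually extend $\omega^*$ just past $\partial M$, so that the duality gap vanishes and \eqref{eqn: MFMC} is proved.
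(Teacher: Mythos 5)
Your existence argument has a genuine gap. You assert that a minimizing sequence $C_n \weakto C^*$ preserves both $\partial C_n = S$ and $[C_n] = \alpha$, and that lower semicontinuity then gives the minimizer. But the relative homology class $[C]$ is computed by pairing against compactly supported closed forms in $M$ (an open manifold), and this pairing is \emph{not} preserved under weak-$*$ convergence when mass escapes to $\partial M$. This is exactly the failure mode depicted in Figure \ref{fig: basic examples} (right): take $M = (0,2)\times\Sph^1$ with $\dif x^2 + \cosh^2 x\,\dif\theta^2$, $S = 0$, $\alpha$ a generator. Any minimizing sequence converges to the boundary circle $\{x=0\}$; as a current in the open manifold $M$ this limit is zero, so $[C^*] = 0 \neq \alpha$. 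The direct method on currents therefore does not close without invoking strict mean convexity already at the existence stage, and your proposal only brings mean convexity in for the duality part. Moreover, the boundary operator for currents of finite mass near $\partial M$ is itself delicate (the paper must construct it by hand in Construction \ref{cons: boundary homomorphism}), so "weak continuity of $\partial$" is not a safe black box here. The paper circumvents all of this by passing to primitives: a function $u$ of least gradient on the universal cover with prescribed $L^\infty$ trace $f$, whose superlevel sets are perimeter-minimizing and hence, by the mean curvature barrier condition, cannot touch $\partial M$ (Theorems \ref{thm: existence} and \ref{thm: mean curvature barrier condition interpretation}). This is the step that is genuinely hard and is missing from your write-up.

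On the duality side your instinct is sound: the paper explicitly notes that a Hahn--Banach argument (via \cite{Federer1974}) yields $F^*$, which is what your Fenchel--Rockafellar/minimax sketch is gesturing at. The paper instead constructs $F^*$ as a limit of conjugate $p$-harmonic forms as $p\to 1$ (Lemma \ref{lma: Hanh Banach}), chosen deliberately because the $p$-Laplacian route connects to \cite{daskalopoulos2020transverse,daskalopoulos2022,daskalopoulos2023} and to Thurston's program. But your minimax swap is asserted rather than proved, and the boundary-trace/normal-trace issues you flag at the end are real; you acknowledge them but do not resolve them, whereas the paper handles the analogous traces via the normal trace theorem and the $W^{1/2,2}$ pairing in the Lipschitz case followed by a $W^{1,1}$ approximation. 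So the duality half is a defensible alternative route in spirit, but as written it is not a proof; and the existence half, as written, is incorrect.
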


The existence of the maximizing vector field $F^*$ is an easy argument based on the Hanh-Banach theorem \cite[Theorem 4.10(1)]{Federer1974}.
We include a separate proof based on the $p$-Laplacian, which in view of results of \cite{daskalopoulos2022,daskalopoulos2023} is more likely to be useful to prove generalizations of Theorem \ref{thm: Thurston}; we hope to return to this point in later work.

The existence of the minimizing current $C^*$ is not so easy, because the classical literature on Plateau's problem has been concerned with \emph{integral} currents.
A primitive of a minimizing (real) current is called a \dfn{function of least gradient}, and they have received much attention from analysts in recent years.
A modification of the arguments of \cite{Jerrard18, Górny2024} shows that the Dirichlet problem for functions of least gradient can be solved and so, after some work to set up the formalism of currents, we construct $C^*$.

On the other hand, when we restrict to rational chains, the situation greatly simplifies.
In that case, we can rescale so that all chains have integer multiplicity, so by the solution of Plateau's problem (for example, by easy generalizations of \cite[Chapter 7, Lemma 2.1 and Theorem 5.8]{simon1983GMT}), we obtain the following special case, which was already known.

\begin{corollary}\label{crly: rational MFMC}
In the situation of Theorem \ref{thm: MFMC}, if $\alpha \in H_{d - 1}((M, \partial M), \QQ)$ and $S$ is a rational $d - 2$-cycle, $C^*$ can be chosen to be a rational $d - 1$-chain of disjoint area-minimizing hypersurfaces, smooth away from a set of Hausdorff codimension $8$.
\end{corollary}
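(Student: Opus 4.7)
The plan is to clear denominators and invoke the classical theory of mass-minimizing integral currents. Because $\alpha$ and $S$ are rational, I pick a positive integer $N$ so that $N\alpha$ is the image of an integer class in $H_{d-1}((M,\partial M),\ZZ)$ and $NS$ is an integral $(d-2)$-cycle; compatibility $\partial(N\alpha) = [NS]$ follows from naturality of the connecting homomorphism applied to the given relation for $\alpha$ and $S$.

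With the data now integer-valued, the classical existence theorem \cite[Ch.~7, Lemma~2.1]{simon1983GMT} produces an integral $(d-1)$-current $T$ that minimizes mass among integer currents with class $N\alpha$ and boundary $NS$; strict mean convexity of $\partial M$ is needed to confine the minimizer away from $\partial M$. Federer's regularity theorem \cite[Ch.~7, Theorem~5.8]{simon1983GMT} then expresses $T$ as a finite integer-weighted sum of smooth, disjoint, embedded area-minimizing hypersurfaces with singular set of Hausdorff codimension at least $8$. Setting $C^* := T/N$ yields a rational chain with $[C^*] = \alpha$, $\partial C^* = S$, and the required geometric structure.

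It remains to verify that $C^*$ attains the real minimum $\mass(C^*_\RR)$ from Theorem \ref{thm: MFMC}. The calibration inequality with any maximizing flow $F^* \in \mathcal F$ gives one direction: since $\|F^*\|_{L^\infty}\le 1$ and the flow integral depends only on the real homology class,
\[
\mass(T) \geq \int_T \flux F^* = N \int_{C^*_\RR} \flux F^* = N \mass(C^*_\RR),
\]
so $\mass(C^*) \geq \mass(C^*_\RR)$. The reverse inequality $\mass(C^*) \leq \mass(C^*_\RR)$ is the main obstacle, since one must produce an integer competitor of mass at most $N\mass(C^*_\RR)$. I would handle this by exploiting the structure of $C^*_\RR$ as the distributional derivative of a function of least gradient, as in the proof of Theorem \ref{thm: MFMC}: the superlevel sets have integer-current boundaries whose total coarea-weighted mass equals $\mass(C^*_\RR)$, each such boundary is area-minimizing in its own class, and after enlarging $N$ to accommodate the finitely many rational jump values of the least-gradient function, an appropriate integer combination of these level sets yields an integer current representing $N\alpha$ with boundary $NS$ and mass exactly $N\mass(C^*_\RR)$. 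Combined with the calibration bound above, this pins down $\mass(T) = N\mass(C^*_\RR)$, so $C^*$ is indeed a valid minimizer in Theorem \ref{thm: MFMC}.
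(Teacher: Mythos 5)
Your first two paragraphs match the paper's approach, which is exactly as terse as you suspect: clear denominators, solve the integral Plateau problem, invoke codimension-one regularity, and cite the result as ``already known.'' Your third paragraph is therefore a genuine addition: you correctly notice that one must still verify that the integral minimizer $T$ achieves the \emph{real} minimum $N\mass(C^*_\RR)$ rather than something strictly larger. This is essentially Federer's theorem on real flat chains \cite{Federer1974} transported to the relative setting, and your calibration bound $\mass(T)\geq N\mass(C^*_\RR)$ is correct.

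For the reverse inequality your coarea idea is the right one, but the phrase ``finitely many rational jump values of the least-gradient function'' misdescribes the picture: after clearing denominators the \emph{boundary trace} $f$ has integer jumps and so takes finitely many values on each component of $\partial M$, but the interior function $u$ generically has a continuous range, so the level sets $\{u>t\}$ vary over a continuum of $t$ and one cannot restrict to jump values of $u$. The correct selection principle is an average: for $t$ in a fixed unit interval $(k,k+1)$, all the boundaries $\partial\{u>t\}$ are integer currents carrying the same trace $-\dif 1_{\{f>k\}}$ on $\partial M$ and the same relative class, while the coarea formula $\int_M|\dif u|\dif V=\int_\RR\per(\{u>t\})\dif t$ controls the mean of their masses; choosing a $t_k$ at or below the mean in each such interval (one interval in a fundamental domain, in the equivariant case) and summing the currents $\partial\{u>t_k\}$ produces an integer competitor of mass $\leq N\mass(C^*_\RR)$, which combined with your calibration bound gives the desired equality. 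With that correction the argument closes, and is more explicit than what the paper offers.
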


We have already seen that without rationality assumptions, $C^*$ need not be a chain.
However, by combining Theorem \ref{thm: MFMC} with \cite[Theorem B]{BackusCML}, it immediately follows that $C^*$ is a measured oriented lamination.

\begin{definition}
A (codimension-$1$) \dfn{measured oriented lamination} $\lambda$ consists of:
\begin{enumerate}
\item A set of disjoint smooth connected complete hypersurfaces, called \dfn{leaves}, whose union is a closed set, the \dfn{support} of $\lambda$.
\item An oriented Lipschitz atlas of coordinate charts $U \cong (0, 1)^d$, and closed sets $K_U \subseteq (0, 1)$, in which $\supp \lambda \cap U \cong K_U \times (0, 1)^{d - 1}$, such that for each leaf $N$ and each chart $U$ there exists $k_U \in K_U$ such that $N \cap U \cong \{k_U\} \times (0, 1)^{d - 1}$.
\item A relatively closed $d - 1$-current of finite mass, which we also call $\lambda$, such that for each coordinate chart $U$ there is a finite Borel measure $\mu_U$ with $\supp \mu_U = K_U$, such that for every $\psi \in C^0_\cpt(U, \Omega^{d - 1})$,
$$\int_\lambda \psi = \int_{K_U} \left[\int_{\{k\} \times (0, 1)^{d - 1}} \psi\right] \dif \mu_U(k).$$
\end{enumerate}
The lamination $\lambda$ is \dfn{minimal} if all of its leaves have zero mean curvature, and \dfn{geodesic} if, in addition, $d = 2$.
\end{definition}

\begin{corollary}\label{crly: lamination MFMC}
In the situation of Theorem \ref{thm: MFMC}, if $d \leq 7$, then $C^*$ is a measured oriented minimal lamination.
\end{corollary}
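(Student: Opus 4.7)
The plan is to treat Corollary~\ref{crly: lamination MFMC} as an essentially direct consequence of the structure theorem for mass-minimizing real codimension-$1$ currents, applied to the minimizer $C^*$ produced by Theorem~\ref{thm: MFMC}.

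First I would invoke Theorem~\ref{thm: MFMC} to obtain a $d-1$-current $C^*$ of finite mass with $\partial C^* = S$, $[C^*] = \alpha$, and minimal mass in this class. The key reduction is then to verify that $C^*$ is \emph{locally} mass-minimizing on the open set $M \setminus \supp S$: for any $d$-current $D$ compactly supported there, $C^* + \partial D$ has boundary $S$ and the same relative homology class $\alpha$, so $\mass(C^*) \leq \mass(C^* + \partial D)$. Thus the restriction of $C^*$ to the interior is a locally mass-minimizing real $d-1$-current without boundary, which is precisely the kind of object whose primitive is a function of least gradient, as alluded to in the discussion surrounding Theorem~\ref{thm: MFMC}.

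With this in hand, I would directly apply \cite[Theorem B]{BackusCML}, which identifies such currents with measured oriented minimal laminations in the regular dimension range $d \leq 7$. This gives both the smooth leafwise structure (each leaf a smooth minimal hypersurface, by the usual codimension-$1$ interior regularity that fails in dimension $\geq 8$) and the transverse measure $\mu_U$ recovering the current via slicing, satisfying items (1)--(3) of the definition on the interior of $M$.

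The main obstacle, and the only nontrivial point, is extending the lamination structure across $\partial M$ so that the leaves are complete and the current $C^*$ is relatively closed on $\overline M$. Here I would rely on the strict mean convexity of $\partial M$: by the standard maximum principle applied leafwise, no leaf can be tangent to $\partial M$ or accumulate on $\partial M \setminus \supp S$ from the interior, so the closed support of $\lambda$ meets $\overline M$ in the expected way. The finite-mass hypothesis on $S$ controls how leaves terminate at $\partial M$ along $\supp S$, allowing the atlas charts and the transverse measures $\mu_U$ to be chosen near the boundary so that their restrictions match those produced in the interior by \cite[Theorem B]{BackusCML}. Patching these together yields the measured oriented minimal lamination on all of $\overline M$.
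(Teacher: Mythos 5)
Your proposal is essentially the paper's proof: invoke Theorem~\ref{thm: MFMC} to produce the minimizer $C^*$ and then cite \cite[Theorem B]{BackusCML} for the lamination structure in the range $d \leq 7$, which is exactly the one-line argument the paper gives. The only difference is that the paper obtains the least-gradient structure of $C^*$ directly from the construction in the proof of Theorem~\ref{thm: MFMC}, where $C^* = \dif u$ for a function $u$ of least gradient on $\tilde M$ (so the hypotheses of the structure theorem hold automatically), whereas you re-derive local mass-minimality from the statement of Theorem~\ref{thm: MFMC}; your final paragraph on the behavior near $\partial M$ is extra caution rather than a different route, since the lamination of the definition lives on the open manifold $M$.
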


In \S\ref{sec: MFMC proof}, we prove Theorem \ref{thm: MFMC}.
In \S\ref{sec: epilogue} we state a sharper, but more technical, version of Theorem \ref{thm: MFMC}; we also give a few conjectures and make a comment on the connection to Teichm\"uller theory.

\subsection{Acknowledgments}
I would like to thank Yi Huang for asking me how to interpret Theorem \ref{thm: Thurston} as a max flow/min cut theorem, which was the question which motivated this work. I would also like to thank Georgios Daskalopoulos and Stefan Steinerberger for valuable comments on a previous draft. This research was supported by the National Science Foundation's Graduate Research Fellowship Program under Grant No. DGE-2040433.

\section{Currents on manifolds-with-boundary}\label{sec: Lefschetz section}
We establish some notation.
Let $\overline M = M \cup \partial M$ be a compact Riemannian manifold-with-boundary.
Let $\Omega^\bullet$ be the exterior algebra over the cotangent bundle of $M$, $\Omega^\bullet_\cl$ be the subsheaf of closed forms, $\star$ be the Hodge star, and $\mathcal H^s$ be the $s$-dimensional Hausdorff measure.
Let $\iota: \partial M \to \overline M$ be the inclusion map, which induces a pullback map $\iota^*$ on sections of $\Omega^\bullet$.
The \dfn{flux} of a vector field $F$ is the $d - 1$-form such that for any oriented hypersurface $L \subset M$ with unit normal $\normal$,
$$\int_L \flux F = \int_L \langle \normal, F\rangle \dif \mathcal H^{d - 1}.$$

A \dfn{$k$-current} in $M$ is a continuous linear function on $C^\infty_\cpt(M, \Omega^k)$.
Given a $k$-current $T$ and $\varphi \in C^\infty_\cpt(M, \Omega^k)$, $\int_T \varphi$ means $T(\varphi)$.
The $k$-current $T$ has \dfn{finite mass} if there exists $C > 0$ such that for every $\varphi \in C^\infty_\cpt(M, \Omega^k)$,
\begin{equation}\label{eqn: mass estimate}
\left|\int_T \varphi\right| \leq C \|\varphi\|_{L^\infty}
\end{equation}
and the smallest constant $C$ satisfying (\ref{eqn: mass estimate}) is called $\mass(T)$.\footnote{Since we are mainly interested in currents of codimension $1$, the distinction between the comass of $\varphi$ \cite[Chapter 6, Remark 2.7]{simon1983GMT} and $\|\varphi\|_{L^\infty}$ turns out to be immaterial, so we suppress it.}
By a routine cutoff argument, one can show that every $k$-current of finite mass extends to $C^0(\overline M, \Omega^k)$; Anzellotti's theorem gives a further extension.


\begin{theorem}[Anzellotti's theorem, {\cite{Anzellotti1983}, \cite[Theorem 2.5]{BackusBest2}}]\label{thm: Anzellotti}
Every relatively closed $k$-current of finite mass in $M$ extends to the space of all $\varphi \in L^\infty(M, \Omega^k)$ such that $\dif \varphi \in L^d(M, \Omega^k)$, and for any such $\varphi$, the mass bound (\ref{eqn: mass estimate}) holds.
\end{theorem}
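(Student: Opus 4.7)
The plan is to extend the continuous pairing $T \colon C^0(\overline M, \Omega^k) \to \RR$ by density. For $\varphi \in L^\infty(M, \Omega^k)$ with $d\varphi \in L^d(M, \Omega^{k+1})$, I would first produce smooth approximants $\varphi_\epsilon$: work in coordinate charts via a partition of unity, reflect across $\partial M$ in boundary charts, and mollify. This yields $\|\varphi_\epsilon\|_{L^\infty} \le \|\varphi\|_{L^\infty} + o(1)$, with $d\varphi_\epsilon \to d\varphi$ in $L^d$ and $\varphi_\epsilon \to \varphi$ in $L^p$ for every finite $p$.

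The crux is to show $\{T(\varphi_\epsilon)\}$ is Cauchy in $\RR$, from which one can define $T(\varphi) := \lim_{\epsilon \to 0} T(\varphi_\epsilon)$; the mass bound (\ref{eqn: mass estimate}) then passes to the limit from its version on smooth forms. The Cauchy property follows once we establish, for every smooth $\psi\in C^\infty(\overline M,\Omega^k)$, an estimate of the form
\[
|T(\psi)| \le C_T\bigl(\|\psi\|_{L^p} + \|d\psi\|_{L^d}\bigr)
\]
for some $p < \infty$ and a constant $C_T$ depending on $T$; applied to $\psi = \varphi_\epsilon - \varphi_{\epsilon'}$, both terms on the right vanish as $\epsilon,\epsilon'\to 0$.

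To prove this estimate, I would exploit that $T$ is relatively closed: $T(d\eta) = 0$ for every $\eta \in C^\infty_\cpt(M, \Omega^{k-1})$. Use a Hodge--Morrey-type decomposition on $\overline M$ to write $\psi = d\eta + h + \rho_\partial$, where $\eta$ has \emph{compact} support in $M$ (constructed e.g.\ by a Bogovskii operator applied to a mollification of $d\psi$, or by the Green operator with Dirichlet conditions followed by an interior cutoff), $h$ lies in a finite-dimensional harmonic subspace representing the de Rham class of $\psi$ relative to $\partial M$, and $\rho_\partial$ is a correction supported in a narrow collar of $\partial M$. The harmonic term is controlled in any norm by $\|\psi\|_{L^p}$ by finite-dimensionality; the boundary correction has $\|\rho_\partial\|_{L^\infty}$ bounded by $\|\psi\|_{L^\infty}$ on a collar whose $|T|$-mass can be made small; and the exact term gives $T(d\eta) = 0$. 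The $L^d$ control on $d\psi$ is what allows both the Bogovskii estimate and the smallness of the collar correction to close.

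The main obstacle is handling $\partial M$: the decomposition must produce $\eta$ with compact support in $M$, not merely vanishing trace, because relative closedness is a statement about test forms in $C^\infty_\cpt(M)$; and the critical Sobolev embedding $W^{1,d}\hookrightarrow L^\infty$ just fails, so one cannot simply bound an arbitrary primitive of $d\psi$ in $L^\infty$. Iterating between interior Bogovskii primitives and boundary cutoffs while tracking $L^d/L^\infty$ norms is the delicate part, and is presumably where the proof in \cite[Theorem 2.5]{BackusBest2} devotes most of its work; an alternative worth exploring is to quote Anzellotti's original duality verbatim, applied to a collar chart of $\partial M$ and glued to an interior mollification.
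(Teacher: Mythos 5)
Your mollify-and-pass-to-the-limit shape is natural, but the Cauchy property hinges on the proposed estimate
\[
|T(\psi)| \le C_T\bigl(\|\psi\|_{L^p} + \|\dif\psi\|_{L^d}\bigr),
\]
and this is neither proved nor, in general, true. You essentially concede the point when you remark that $W^{1,d}\not\hookrightarrow L^\infty$: after applying Bogovskii to $\dif\psi$ you obtain a compactly supported primitive $\rho$ with $\dif\rho=\dif\psi$ and $\|\rho\|_{W^{1,d}}\lesssim\|\dif\psi\|_{L^d}$, whence $T(\psi)=T(\rho)$ by relative closedness, but then the only bound available is $|T(\rho)|\le\mass(T)\,\|\rho\|_{L^\infty}$, and $\|\rho\|_{L^\infty}$ is exactly what $W^{1,d}$ does not control. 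Iterating cutoffs does not help, because the error forms $\dif\chi\wedge\eta$ involve the unbounded primitive $\eta$. So the argument as written does not close. Worse, the estimate is actually false as a bound on arbitrary smooth $\psi$: take $k=0$ and $T=\delta_{x_0}$, which is a relatively closed $0$-current of finite mass (the relative-closedness condition is vacuous there). The logarithmic capacity cutoffs $u_\varepsilon(x)=\min\bigl(1,\log(\delta/|x-x_0|)/\log(\delta/\varepsilon)\bigr)$ satisfy $u_\varepsilon(x_0)=1$ while $\|\dif u_\varepsilon\|_{L^d}^d\sim(\log(\delta/\varepsilon))^{1-d}\to 0$ and $\|u_\varepsilon\|_{L^p}\to 0$, so $|T(u_\varepsilon)|=1$ but the right-hand side tends to $0$. (This also shows that a point has zero $W^{1,d}$-capacity, so the claimed extension itself needs care at $k=0$; the paper's applications are all at $k=d-1$, where the support of a relatively closed current has positive $W^{1,d}$-capacity.)

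The structural idea you are missing is the one that makes Anzellotti's argument work and explains why $L^d$ is the right exponent. In the relevant codimension-one case $k=d-1$, a relatively closed finite-mass current $T$ is locally of the form $T(\psi)=\int_M \dif u\wedge\psi$ with $u\in BV_{\loc}$. By the Sobolev embedding $BV\hookrightarrow L^{d/(d-1)}$, the primitive $u$ lies in $L^{d/(d-1)}$, which is the H\"older conjugate of $L^d$. Integrating by parts on smooth data,
\[
T(\psi)=-\int_M u\,\dif\psi + \int_{\partial M} u\,\psi,
\]
so the bulk term extends by H\"older duality to any $\psi$ with $\dif\psi\in L^d$, with no Bogovskii or Hodge decomposition needed; the boundary term is handled by the same cutoff argument that gives the extension to $C^0(\overline M)$ together with the $L^\infty$ bound on $\psi$. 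One then passes to the limit in the mollification to obtain the mass bound. Your Hodge--Morrey route aims for a decomposition that would prove a stronger and false estimate; the correct proof instead exploits that $T$ has a $BV$ potential one degree down, and the critical $L^d$/$L^{d/(d-1)}$ conjugate pairing is precisely what makes the bulk term converge.
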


Given an open set $U \subseteq \overline M$ with Lipschitz boundary, $T \restr U$ means the restriction of $T$ to $C^\infty_\cpt(U, \Omega^k)$.
The current $T$ is \dfn{relatively closed} if $T$ annihilates the image of 
\[\begin{tikzcd}
	{C^\infty_\cpt(M, \Omega^{k - 1})} & {C^\infty_\cpt(M, \Omega^k)}
	\arrow["\dif", from=1-1, to=1-2]
\end{tikzcd}\]
Since a current $T$ in $M$ is a continuous linear function on $C^\infty_\cpt(M, \Omega^k)$, in the classical theory of currents, \cite{simon1983GMT}, the boundary $\partial T$ does not detect the behavior of $T$ near $\partial M$.
As intimated by the fact that the pullback of a distribution is only defined along a submersion \cite[Chapter 6]{hörmander2007analysis}, one cannot in general define $\partial T$ to include information about the germ of $T$ along $\partial M$.
We here extend the boundary map to currents which have finite mass.
The formulation here was suggested by an anonymous MathOverflow user.

\begin{construction}\label{cons: boundary homomorphism}
We define the boundary homomorphism
\begin{equation}\label{eqn: current in direct sum}
\begin{tikzcd}
	{C^0(\overline M, \Omega^k)^*} & {C^1(\overline M, \Omega^{k - 1})^* \oplus C^1(\partial M, \Omega^{k - 1})^*}
	\arrow["\partial", from=1-1, to=1-2]
\end{tikzcd}
\end{equation}
Let $T$ be a $k$-current of finite mass, and for each $\eta \in C^1_\cpt(M, \Omega^{k - 1})$, let $\int_{S_1} \eta := \int_T \dif \eta$.
Then $S_1$ extends to a current in $C^1(\overline M, \Omega^{k - 1})^*$, by a cutoff argument.
For each $\psi \in C^1(\partial M, \Omega^{k - 1})$, choose $\overline \psi \in C^1(\overline M, \Omega^{k - 1})$ such that $\iota^* \overline \psi = \psi$ and let
$$\int_{S_2} \psi := \int_T \dif \overline \psi - \int_{S_1} \overline \psi.$$
Then $\partial T := (S_1, S_2)$.

To show that $\partial$ is well-defined, we must show that if $\psi = 0$ then
\begin{equation}\label{eqn: boundary is welldefined}
\int_T \dif \overline \psi = \int_{S_1} \overline \psi.
\end{equation}
We may assume that $M = \RR^d_+$, $\partial M = \RR^{d - 1}$, and a typical point is $(x, y)$ where $x \in \RR^{d - 1}$ and $y > 0$.
Choose $K_n := \{y > 1/n\}$ and $\chi_n \in C^\infty_\cpt(K_n, [0, 1])$ such that $\chi_n \restr K_{n - 1} = 1$, $\|\dif \chi_n\|_{L^\infty} \lesssim n$, and $\partial_x \chi_n = 0$.
Therefore, if $P$ denotes the orthogonal projection which annihilates $\dif y$,
$$\dif \chi_n \wedge \overline \psi = \dif \chi_n \wedge P\overline \psi.$$
But since 
$$P\overline \psi(x, 0) = \iota^* \overline \psi(x, 0) = 0,$$
and $\overline \psi \in C^1(\overline M, \Omega^{k - 1})$, we obtain by Taylor expansion that 
$$|P\overline \psi(x, y)| \lesssim |y|$$
and in particular that
$$\|P\overline \psi\|_{L^\infty(M \setminus K_n)} \lesssim \frac{1}{n}.$$
By definition of $S_1$, and since $T$ has finite mass, 
\begin{align*}
\left|\int_T \dif \overline \psi - \int_{S_1} \overline \psi\right| 
&= \lim_{n \to \infty} \left|\int_T \chi_n \dif \overline \psi - \int_{S_1} \chi_n \overline \psi\right| \\
&= \lim_{n \to \infty} \left|\int_T \dif \chi_n \wedge \overline \psi\right| \\
&\lesssim \lim_{n \to \infty} n \mass(T \restr (M \setminus K_n)) \cdot \|P\overline \psi\|_{L^\infty(M \setminus K_n)} \\
&= 0
\end{align*}
which completes the proof of (\ref{eqn: boundary is welldefined}).
\end{construction}

Since $H^\bullet((M, \partial M), \RR)$ is the cohomology of the chain complex $C^\infty_\cpt(M, \Omega^\bullet)$, every relatively closed $k$-current $T$ has a homology class $[T] \in H_k((M, \partial M), \RR)$.
It follows from the zigzag lemma and de Rham's theorem that the connecting homomorphism in homology (\ref{eqn: homology compatibility}) is induced by the boundary operator (\ref{eqn: current in direct sum}).

Let $K$ be a subfield of $\CC$.
A routine argument shows that the map
\begin{align*}
C^0(\overline M, \Omega^k_\cl) &\to C^0_\cpt(M, \Omega^{d - k})^* \\
\eta &\mapsto \left(\varphi \mapsto \int_M \eta \wedge \varphi\right)
\end{align*}
induces isomorphisms
\begin{equation}\label{eqn: Lefschetz isomorphism}
H^k(M, K) \to H_{d - k}((M, \partial M), K).
\end{equation}
For each $\alpha \in H^{d - k}((M, \partial M), K)$, the \dfn{Lefschetz dual} of $\alpha$, $\hat \alpha \in H^k(M, K)$, is the preimage of $\alpha$ under (\ref{eqn: Lefschetz isomorphism}).

\begin{lemma}\label{lma: Lefschetz duality commutes with pullback}
The diagram 
\[\begin{tikzcd}
	{H_{d - k}((M, \partial M), \RR)} && {H_{d - k - 1}(\partial M, \RR)} \\
	{H^k(M,\RR)} && {H^k(\partial M, \RR)}
	\arrow["{(-1)^k \partial}", from=1-1, to=1-3]
	\arrow[from=1-1, to=2-1]
	\arrow[from=1-3, to=2-3]
	\arrow["{\iota^*}", from=2-1, to=2-3]
\end{tikzcd}\]
commutes, where the vertical arrows are Lefschetz duality and $\partial$ is the connecting homomorphism of (\ref{eqn: homology compatibility}).
\end{lemma}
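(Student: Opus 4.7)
The plan is to pick a smooth representative of $\hat\alpha$, compute the boundary current directly from Construction \ref{cons: boundary homomorphism}, and track signs via Stokes' theorem. I would start by using de Rham's theorem to represent $\hat\alpha \in H^k(M, \RR)$ by a smooth closed $k$-form $\eta$, so that $\alpha$ is represented by the relatively closed $(d-k)$-current of finite mass
\[
T_\eta : \varphi \mapsto \int_M \eta \wedge \varphi.
\]

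Next, I would compute $\partial T_\eta = (S_1, S_2)$. Because $\eta$ is closed, Stokes' theorem shows $T_\eta$ is relatively closed, so $S_1$ annihilates $C^1_\cpt(M, \Omega^{d-k-1})$; the cutoff extension of $S_1$ to $C^1(\overline M)$ also vanishes, because for $\xi \in C^1(\overline M, \Omega^{d-k-1})$ and the cutoff sequence $\chi_n$ from Construction \ref{cons: boundary homomorphism}, each $\chi_n \xi$ is compactly supported in $M$ and thus $\int_{T_\eta} d(\chi_n \xi) = 0$. For $S_2$, pick any extension $\overline\psi \in C^1(\overline M, \Omega^{d-k-1})$ of $\psi$. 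Since $d\eta = 0$, the graded Leibniz rule gives $d(\eta \wedge \overline\psi) = (-1)^k \eta \wedge d\overline\psi$, and Stokes' theorem then yields
\[
\int_{S_2} \psi = \int_{T_\eta} d\overline\psi = \int_M \eta \wedge d\overline\psi = (-1)^k \int_{\partial M} \iota^*\eta \wedge \psi.
\]

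This identity says that, under Lefschetz duality on $\partial M$, the class $[S_2] = \partial \alpha$ corresponds to $(-1)^k [\iota^*\eta] = (-1)^k \iota^* \hat\alpha \in H^k(\partial M, \RR)$. Equivalently, $\widehat{\partial \alpha} = (-1)^k \iota^* \hat\alpha$, which rearranges by linearity to $\iota^* \hat\alpha = \widehat{(-1)^k \partial \alpha}$, exactly the commutativity claim. I expect the main obstacle to be the sign bookkeeping: the $(-1)^k$ in the diagram comes precisely from moving $d$ past $\eta$ in $d(\eta \wedge \overline\psi)$, so it is essential that \eqref{eqn: Lefschetz isomorphism} places $\eta$ on the left of the wedge product; all remaining steps are routine manipulations with smooth forms and the cutoff argument from Construction \ref{cons: boundary homomorphism}.
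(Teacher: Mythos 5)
Your proposal is correct and follows the paper's proof essentially verbatim: choose a smooth closed representative $\eta \in C^\infty(\overline M, \Omega^k_\cl)$ of $\hat\alpha$, form the current $T_\eta$, and compute $\int_{\partial T_\eta}\psi = \int_M \eta\wedge\dif\psi = (-1)^k\int_{\partial M}\iota^*\eta\wedge\psi$ via the graded Leibniz rule and Stokes' theorem. The only difference is that you spell out the two-component structure $\partial T_\eta = (S_1, S_2)$ from Construction \ref{cons: boundary homomorphism} and verify $S_1 = 0$ explicitly, whereas the paper uses this implicitly when it writes $\int_{\partial T_\eta}\psi = \int_{T_\eta}\dif\psi$; this is a mild clarification, not a different route.
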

\begin{proof}
Let $\alpha \in H_{d - k}((M, \partial M), \RR)$ and choose $\eta \in C^\infty(\overline M, \Omega^k_\cl)$ such that $[\eta] = \hat \alpha$.
Define for each $\varphi \in C^\infty(\overline M, \Omega^{d - k})$,
$$\int_{T_\eta} \varphi := \int_M \eta \wedge \varphi.$$
Then for every $\psi \in C^\infty(\overline M, \Omega^{d - k - 1})$,
$$\int_{\partial T_\eta} \psi = \int_{T_\eta} \dif \psi = \int_M \eta \wedge \dif \psi = (-1)^k \int_{\partial M} \eta \wedge \psi = (-1)^k \int_{\partial M} \iota^* \eta \wedge \psi.$$
Therefore $\iota^* \hat \alpha$ is the Lefschetz dual of
\begin{align*}
(-1)^k [\partial T_\eta] &= (-1)^k \partial [T_\eta] = (-1)^k \partial \alpha. \qedhere
\end{align*}
\end{proof}

Let $u$ be a distribution.
If the $d - 1$-current $\varphi \mapsto \int_M \dif u \wedge \varphi$ has finite mass, then we write $u \in BV(M)$ and denote its mass by $\int_M |\dif u| \dif V$.
If $u \in W^{1, 1}(M)$, then this quantity agrees with the usual $L^1$ norm of $\dif u$.

\begin{theorem}[$BV$ trace theorem, {\cite[Chapter 2]{Giusti77}}]\label{thm: trace}
For every $u \in BV(M)$, there exists a unique $u \restr \partial M \in L^1(\partial M)$ such that for every $\varphi \in C^1(\overline M, \Omega^{d - 1})$,
$$\int_M \dif u \wedge \varphi + \int_M u \dif \varphi = \int_{\partial M} (u \restr \partial M) \varphi.$$
\end{theorem}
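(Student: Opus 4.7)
The plan is to reduce to a local half-space model via partition of unity, construct the trace by slicing applied to smooth approximations, and then obtain the integration by parts formula by passing to the limit from Stokes' theorem.

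First, I would cover $\overline M$ by finitely many charts---boundary-flattening charts for points on $\partial M$ and interior charts for points in $M$---and choose a subordinate partition of unity $\{\chi_i\}$. Since $u = \sum_i \chi_i u$, it suffices to define the trace of each $\chi_i u$ separately. Interior pieces contribute nothing to the trace, so the question reduces to the model situation where $M = \RR^{d-1} \times [0, \infty)$, $\partial M = \RR^{d-1} \times \{0\}$, and $u$ is compactly supported.

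In this model, I would approximate $u$ by smooth $u_j \in C^\infty_\cpt(\overline M)$ with $u_j \to u$ in $L^1$ and $\int_M |du_j|\, dV \to \int_M |du|\, dV$; this \emph{strict} approximation is standard (Anzellotti--Giaquinta), built by mollifying with a scale that depends on the distance to $\partial M$. For smooth $u_j$, the slicing identity $u_j(x,0) = u_j(x, t) - \int_0^t \partial_\tau u_j(x, \tau)\, d\tau$, integrated over $x \in \RR^{d-1}$ and averaged over $t \in [0, T]$, yields
$$T \int_{\RR^{d-1}} |u_j(x,0)|\, dx \leq \int_{\RR^{d-1} \times [0, T]} |u_j|\, dV + T \int_M |du_j|\, dV.$$
Applied to the differences $u_j - u_k$, this makes the boundary restrictions $u_j \restr \partial M$ into a Cauchy sequence in $L^1(\partial M)$, and I would define $u \restr \partial M$ to be their limit.

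To justify the integration by parts identity, write Stokes' theorem for each smooth approximant:
$$\int_M du_j \wedge \varphi + \int_M u_j\, d\varphi = \int_{\partial M} (u_j \restr \partial M)\, \iota^* \varphi.$$
The left-hand side converges to $\int_M du \wedge \varphi + \int_M u\, d\varphi$ by $L^1$-convergence $u_j \to u$ and by the fact that strict $BV$ convergence implies weak-$*$ convergence of $du_j$ to $du$ when paired against continuous $(d-1)$-forms; the right-hand side converges by the $L^1(\partial M)$ convergence from the previous step. Uniqueness is immediate: if $v_1, v_2 \in L^1(\partial M)$ both satisfy the identity, then $\int_{\partial M}(v_1 - v_2)\, \iota^* \varphi = 0$ for every $\varphi \in C^1(\overline M, \Omega^{d-1})$, and since any $\psi \in C^1(\partial M, \Omega^{d-1})$ is the pullback $\iota^* \varphi$ of some such $\varphi$ (extend through a tubular collar), this forces $v_1 = v_2$ a.e. The main obstacle is the strict smooth approximation: the mollification scale must decay to zero at $\partial M$ quickly enough that the boundary traces $u_j \restr \partial M$ converge in $L^1$, yet not so quickly that strict convergence of $BV$ norms is lost. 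Once this balance is struck, the slicing estimate controls the traces of the differences and the remaining steps reduce to Stokes' theorem and density.
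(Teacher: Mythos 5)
The paper does not prove this theorem; it is cited directly from Giusti's book, so there is no ``paper's own proof'' to compare against. Evaluating your sketch on its own merits: the overall architecture (partition of unity, boundary-flattening charts, slicing inequality, Stokes' theorem in the limit) is sound and is broadly how the trace theorem is proved, but the Cauchy step has a real gap that your closing paragraph identifies vaguely without resolving.

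The problem is the term $T\int_M |d(u_j - u_k)|\,\dif V$ on the right-hand side of your slicing estimate. Strict $BV$ convergence gives $\int_M |du_j|\,\dif V \to \int_M |du|\,\dif V$, but this is very far from $\int_M |d(u_j - u_k)|\,\dif V \to 0$: for a genuine $BV$ function $u$ (not $W^{1,1}$), the measure $du$ has a singular part and cannot be approximated in total variation by smooth forms. So your slicing inequality, applied to $u_j - u_k$ and read naively, does not show the traces are Cauchy in $L^1(\partial M)$. You propose to fix this by tuning the mollification scale near $\partial M$, but that is not the right lever---the Anzellotti--Giaquinta variable-scale mollification that makes traces match presupposes that the trace already exists, which would be circular here.

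The correct repair is a two-scale limit that uses the finiteness of $|Du|$ as a Radon measure. First replace the estimate with
\[
\int_{\partial M} |u_j - u_k|\,\dif\mathcal H^{d-1} \;\leq\; \frac{1}{T}\int_{\{0<x_d<T\}} |u_j - u_k|\,\dif V \;+\; |Du_j|(\{0<x_d<T\}) + |Du_k|(\{0<x_d<T\}).
\]
Strict convergence implies $|Du_j| \weakto^* |Du|$ as measures, so for a.e.\ fixed $T>0$ (those with $|Du|(\{x_d = T\}) = 0$) the right-hand side tends to $2\,|Du|(\{0<x_d<T\})$ as $j,k\to\infty$. Only \emph{then} do you send $T\to 0$, using that $|Du|$ does not charge $\partial M$. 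Equivalently---and this is in fact Giusti's own route in Chapter 2---one can dispense with mollification altogether and compare the interior translates $u(\cdot + \varepsilon\,\normal)$, whose restrictions to $\partial M$ make sense for a.e.\ $\varepsilon$ by Fubini; the difference of two translates is controlled directly by $|Du|$ of a thin slab, which vanishes as the slab shrinks. Either way, the missing ingredient is the absolute continuity of the slab measure $T\mapsto|Du|(\{0<x_d<T\})$ near $T=0$, not a cleverly chosen mollification scale. With that inserted, the rest of your proof---passing to the limit in Stokes' theorem and the uniqueness argument via density of $\iota^*\varphi$---is fine.
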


Let $u \in BV(M)$.
When no confusion can occur, we write $u$ to mean its trace $u \restr \partial M$.
It follows from the $BV$ trace theorem and Construction \ref{cons: boundary homomorphism} that the current $\int_T \varphi := \int_M \dif u \wedge \varphi$ is relatively closed, and for every $\psi \in C^1(\partial M, \Omega^{d - 2})$,
\begin{equation}\label{eqn: boundary current is trace}
\int_{\partial T} \psi = \int_{\partial M} u \dif \psi.
\end{equation}

In the $BV$ calculus of variations, one needs a slightly stronger trace notion \cite{Jerrard18}.
A function $f \in L^1(\partial M)$ is the \dfn{$L^\infty$ trace} of $u \in BV(M)$, if for $\mathcal H^{d - 1}$-almost every $x \in \partial M$, and every $\varepsilon > 0$, there exists an open neighborhood $U$ of $x$ such that
$$\esssup_{U \cap M} |u - f(x)| < \varepsilon.$$
If $f$ is the $L^\infty$ trace of $u$, then $f = u \restr \partial U$ in the sense of Theorem \ref{thm: trace}.

\begin{theorem}[inverse trace theorem, {\cite[Lemma 4.2]{Górny2024}}]\label{thm: Gagliardo}
For every $f \in L^1(\partial M)$ there exists $f^\sharp \in W^{1, 1}(M)$ such that:
\begin{enumerate}
\item $f^\sharp \restr \partial M = f$.
\item $\|f^\sharp\|_{W^{1, 1}(M)} \lesssim \|f\|_{L^1(\partial M)}$.
\item If $f$ is continuous $\mathcal H^{d - 1}$-almost everywhere, then the $L^\infty$ trace of $f^\sharp$ is $f$.
\end{enumerate}
\end{theorem}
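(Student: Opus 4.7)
The plan is to construct $f^\sharp$ locally near $\partial M$, using a partition of unity and boundary normal coordinates to reduce to the model case $M = \RR^d_+$ and $\partial M = \RR^{d-1}$. It then suffices to produce an extension operator $f \mapsto f^\sharp$ from $L^1(\RR^{d-1})$ to $W^{1,1}$ functions supported in $\RR^{d-1} \times [0, \delta)$ satisfying the three properties.

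I would construct $f^\sharp$ via a Whitney decomposition of $\{0 < y < \delta\}$ into dyadic cubes $\{Q_j\}$ with $\text{side}(Q_j) \sim \dist(Q_j, \partial M)$. For each $Q_j$, let $\pi(Q_j) \subset \RR^{d-1}$ denote its orthogonal shadow on $\partial M$, and set $c_j := \dashint_{\pi(Q_j)} f$. Choose a smooth partition of unity $\{\psi_j\}$ subordinate to a bounded enlargement of $\{Q_j\}$ with $|\nabla \psi_j| \lesssim 1/\text{side}(Q_j)$, and define
$$ f^\sharp(x, y) := \chi(y) \sum_j c_j \psi_j(x, y), $$
where $\chi$ is a smooth cutoff with $\chi(0) = 1$ supported in $[0, \delta)$.

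The $L^1$ estimate on $f^\sharp$ is routine: the shadows $\pi(Q_j)$ at each dyadic scale $2^{-k}$ tile $\partial M$ with bounded overlap, so $\sum_{j: \text{side}(Q_j) = 2^{-k}} |c_j| \, |Q_j| \lesssim 2^{-k} \|f\|_{L^1(\partial M)}$, and summing the geometric series in $k$ gives the bound. The gradient estimate is the main obstacle. Using $\sum_j \psi_j \equiv 1$, one has $|\nabla f^\sharp| \lesssim \text{side}(Q_j)^{-1} \max_{j' \sim j} |c_j - c_{j'}|$ on each Whitney cube, and naively bounding $|c_j - c_{j'}| \lesssim \dashint |f|$ yields a logarithmically divergent sum across scales. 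The fix, following Gagliardo's original proof as carried out in \cite{Górny2024}, is to exploit that neighboring Whitney averages differ by a controlled oscillation of $f$ along the dyadic filtration of $\partial M$, together with an atomic-style reorganization that ensures each boundary point contributes its $|f|$-value only $O(1)$ times to the total gradient integral, rather than once per dyadic scale.

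Finally, the trace properties follow by standard limiting arguments. For property (1), I would verify the integration-by-parts identity of Theorem \ref{thm: trace} by dominated convergence, using the Lebesgue differentiation theorem to conclude $c_j \to f(x_0)$ as $Q_j$ shrinks to $x_0$ at $\mathcal H^{d-1}$-a.e.\ $x_0 \in \partial M$. For property (3), at a continuity point $x_0$, choose for each $\varepsilon > 0$ a neighborhood $U \ni x_0$ in $\partial M$ on which $|f - f(x_0)| < \varepsilon$; then Whitney cubes whose shadows lie in $U$ have $|c_j - f(x_0)| < \varepsilon$, so $|f^\sharp - f(x_0)| < \varepsilon$ essentially uniformly on a neighborhood of $x_0$ in $M$, which is precisely the $L^\infty$ trace property.
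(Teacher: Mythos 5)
The paper does not actually prove Theorem~\ref{thm: Gagliardo}; it cites it directly to \cite[Lemma 4.2]{Górny2024}. So the relevant question is whether your sketch is correct on its own terms, and here there is a genuine gap: your construction is \emph{linear} in $f$ (each $c_j = \dashint_{\pi(Q_j)} f$ is a linear functional, and the sum $\chi(y)\sum_j c_j\psi_j$ is a fixed linear combination), but by a theorem of Peetre (\emph{A counter-example connected with Gagliardo's trace theorem}, 1979) there is no bounded linear right inverse to the trace map $W^{1,1}(M) \to L^1(\partial M)$. Hence no refinement of the gradient estimate can rescue this particular $f^\sharp$.

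You can see the failure concretely. Take $M = \RR^d_+$ and $f_\varepsilon = \varepsilon^{-(d-1)}\mathbf 1_{B(0,\varepsilon)}$, so $\|f_\varepsilon\|_{L^1} = c_d$. For every dyadic scale $2^{-k}$ with $\varepsilon \le 2^{-k} \le \delta$, the shadow of the Whitney cube nearest the origin contains $\supp f_\varepsilon$, giving $c_j \sim 2^{k(d-1)}$, while neighboring shadows miss $\supp f_\varepsilon$ and have $c_{j'} = 0$. On that cube $|\nabla f^\sharp| \sim 2^k\,|c_j - c_{j'}| \sim 2^{kd}$, and multiplying by $|Q_j| \sim 2^{-kd}$ gives an $O(1)$ contribution \emph{per scale}. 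Summing over the $\sim\log(1/\varepsilon)$ scales yields $\|\nabla f^\sharp\|_{L^1} \gtrsim \log(1/\varepsilon)\,\|f_\varepsilon\|_{L^1}$, which blows up. This is exactly the ``logarithmically divergent sum'' you identified, except that for the averaging construction it is not an artifact of a crude estimate but the true size of $\|\nabla f^\sharp\|_{L^1}$, so the appeal to ``a controlled oscillation'' and ``atomic-style reorganization'' does not close the gap. Gagliardo's extension (and G\'orny's adaptation of it) is genuinely nonlinear: one approximates $f$ in $L^1$ by a rapidly converging sequence of smooth (or simple) functions $f_k$, chooses a sequence of heights $0 = t_0 < t_1 < \cdots$ \emph{adapted to $f$} so that the telescoping differences $f_{k+1} - f_k$ are each absorbed in a slab of the right thickness, and interpolates; the nonlinearity in the choice of the $t_k$ is essential. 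Your handling of properties (1) and (3) via Lebesgue differentiation and continuity at $x_0$ is the right idea and would carry over to the corrected construction, but (2) as argued does not hold.
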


\section{Proof of the max flow/min cut theorem}\label{sec: MFMC proof}
There need not be a area-minimizing real chain in an irrational homology class.
Instead, we shall have to work with primitives of real mass-minimizing currents, which are known as functions of least gradient \cite{gorny2024leastgradient}.
Since we are not assuming $H_{d - 1}((M, \partial M), \RR) = 0$, these primitives only exist on the universal cover $\tilde M$ \cite{daskalopoulos2020transverse}.

To make this precise, we shall use the natural isomorphisms $H^1(M, \RR) = \Hom(\pi_1(M), \RR)$ and $H^1(\partial M, \RR) = \Hom(\pi_1(\partial M), \RR)$.
Given $\hat \alpha \in H^1(M, \RR)$, a function $f$ on the universal cover $\tilde M$ is \dfn{$\hat \alpha$-equivariant} if for every deck transformation $\gamma \in \pi_1(M)$ and $x \in \tilde M$,
$$f(\gamma x) = f(x) + \hat \alpha(\gamma).$$
In other words, $f$ is $\hat \alpha$-equivariant iff $\dif f$ descends to a $1$-form on $M$ whose cohomology class is $\hat \alpha$.

\begin{definition}
Let $\hat \alpha \in H^1(M, \RR)$ and $u \in BV_\loc(\tilde M)$ be $\hat \alpha$-equivariant.
Then $u$ has \dfn{least gradient}, if for every $\hat \alpha$-equivariant $v \in BV_\loc(\tilde M)$ such that $(u - v) \restr \partial \tilde M = 0$,
$$\int_M |\dif u| \dif V \leq \int_M |\dif v| \dif V.$$
A measurable set $E \subseteq M$ is \dfn{perimeter-minimizing} if $1_E$ lifts to a function of least gradient.
\end{definition}

We view $\overline M$ as a codimension-$0$ subset of a larger manifold $N$, by gluing $\overline M$ along $\partial M$ to $\partial M \times \RR_+$ to obtain a smooth manifold $N$ such that $\overline M$ is a strong deformation retract of $N$.
We extend the Riemannian metric to a complete Riemannian metric on $N$.

\begin{definition}\label{dfn: mean curvature barrier condition}
The \dfn{mean curvature barrier condition} asserts that for every sufficiently small simply connected open set $U \subseteq N$ such that $U \cap \partial M \neq \emptyset$, and every measurable set $E \subseteq M$ such that $E \cap U$ is perimeter-minimizing in $U \cap M$, $\partial E \cap \partial M \cap U = \emptyset$.
\end{definition}

The mean curvature barrier condition was introduced in \cite{Jerrard18} as a natural convexity hypothesis to impose when studying the solvability of the Dirichlet problem for functions of least gradient.
The boundary $\partial E$ of the measurable set $E$ is understood in the measure-theoretic sense of \cite[Remark 3.2]{Giusti77}.
The following theorem interprets the mean curvature barrier condition as a weak form of strict mean convexity.
The backwards direction of this theorem, which is the direction we need, is essentially \cite[Theorem 3.1]{Jerrard18}; we include the proof just for the reader's convenience and to emphasize that the forwards direction can be treated by essentially the same techniques.

\begin{theorem}\label{thm: mean curvature barrier condition interpretation}
Let $H_M$ be the mean curvature of $\partial M$.
The mean curvature barrier condition holds iff $\{H_M > 0\}$ is a dense subset of $\partial M$.
\end{theorem}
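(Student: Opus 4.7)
The plan is to establish both directions via the strong maximum principle for minimal hypersurfaces applied at contact points between $\partial E$ and $\partial M$; the backward direction is essentially the argument of \cite[Theorem 3.1]{Jerrard18}, and the forward direction reverses it.

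For the backward direction, assume $\{H_M > 0\}$ is dense in $\partial M$ and suppose for contradiction that $E \cap U$ is perimeter-minimizing in $U \cap M$ with $\partial E \cap \partial M \cap U \neq \emptyset$. First I would invoke the boundary regularity theory for perimeter-minimizers (De Giorgi, Allard, Hardt--Simon) to replace a touching point by one at which $\partial E$ is a smooth minimal hypersurface tangent to $\partial M$ from the interior side of $M$. Next, the strong maximum principle compares the vanishing mean curvature of $\partial E$ with $H_M$ at this touch point $x$: since $\partial E$ lies on the $M$-side of $\partial M$, this forces $H_M(x) \leq 0$, with coincidence $\partial E = \partial M$ on a neighborhood when $H_M(x) = 0$. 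In the coincidence case, $H_M$ vanishes on an open subset of $\partial M$, contradicting density. In the strict case, density lets me pick $y \in U \cap \partial M$ nearby with $H_M(y) > 0$; after a first-order correction, the level sets $\{\dist(\cdot, \partial M) = s\}$ furnish a smooth foliation of a one-sided tubular neighborhood by hypersurfaces of strictly positive mean curvature, and sliding $\partial E$ until it first touches one of these leaves produces another maximum-principle violation.

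For the forward direction, I would argue contrapositively: if $\{H_M > 0\}$ is not dense, some open $V \subseteq \partial M$ satisfies $H_M \leq 0$ throughout $V$. Near a point $x_0 \in V$, I would construct an explicit counterexample. Work in Fermi coordinates $(\xi, t)$ near $x_0$, with $t = \dist(\cdot, \partial M)$, and choose a small simply connected $U \subseteq N$ meeting $\partial M$ only in $V$. Set $E := \{0 \leq t \leq h\} \cap U$ for small $h > 0$. The vector field $F := -\nabla t$ has $|F| = 1$ and $\Div F$ equal to the mean curvature of the level set $\{t = \text{const}\}$, which is $H_M + O(t)$ and so at most $Ch$ on $U$. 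After a smooth reparametrization in $t$ enforcing $\Div F \leq 0$ exactly, $F$ serves as a calibration certifying that $E$ is perimeter-minimizing in $U \cap M$ among competitors sharing the trace of $1_E$ on $\partial U \cap M$. Since $\partial E$ contains $V \cap U$, the barrier condition fails.

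The main obstacle is the first direction, specifically the possibility that $\partial E$ touches $\partial M$ only at points where $H_M = 0$: density does not upgrade this to strict positivity at any touch point. The foliation device --- deforming the distance level sets so that every nearby leaf has strictly positive mean curvature --- is the geometric core that overcomes this.
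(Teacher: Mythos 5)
The central gap is in your backward direction, at the assertion that the strong maximum principle forces $\partial E$ to coincide with $\partial M$ near a touch point $x$ whenever $H_M(x) = 0$. The strong maximum principle (or Hopf lemma) needs a one-sided sign condition on the inhomogeneity $H_M$ throughout a neighborhood of $x$, not merely at $x$ itself. The genuinely hard case is exactly the one you cannot dispatch: $H_M(x) = 0$ with $H_M$ changing sign arbitrarily close to $x$. This is fully compatible with $\{H_M > 0\}$ being dense, yet your pointwise comparison yields no conclusion, and the sliding-against-a-mean-convex-foliation device also fails because it requires the nearby distance level sets to be \emph{uniformly} mean convex, which is unavailable when $H_M$ oscillates. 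The paper avoids this entirely by a more global comparison on a whole ball $B$: it solves $Pv = 0$ with $v = w$ on $\partial B$ by the inverse function theorem, applies the maximum principle for area-minimizing \emph{currents} (Simon) to conclude $v(0) < 0$, and then the weak maximum principle on $B$ forces $H_M < 0$ somewhere in $B$, hence on an open set, contradicting density. Relatedly, your appeal to De Giorgi/Allard/Hardt--Simon regularity to make the touch point smooth is misdirected: $E \subseteq M$ is an obstacle constraint, and that theory does not give regularity at contact points with the obstacle. The paper sidesteps the regularity question because Simon's current-theoretic maximum principle requires no regularity of the minimizing boundary $\partial V$.

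Your forward direction also has a soft spot. Even if $H_M \leq 0$ on an open patch $V$, the mean curvature of the level sets $\{t = s\}$ is governed by a Riccati-type ODE and need not remain $\leq 0$ for $s > 0$; and any reparametrization $F = -a(t)\nabla t$ enforcing $\Div F \leq 0$ requires $a' \geq a H(t)$, which pushes $a$ above $1$ whenever $H$ becomes positive, destroying the bound $\|F\|_{L^\infty} \leq 1$. The paper instead constructs a genuine minimal graph with prescribed boundary values, shows via the maximum principle that it is tangent to $\partial M$ at $x$ and otherwise lies in $M$, and invokes the fact that sufficiently small pieces of minimal hypersurfaces are area-minimizing. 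That construction is what gives a clean counterexample set $\{z > v(y)\}$ without any calibration delicacies.
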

\begin{proof}
Given $x \in \partial M$, consider normal coordinates $(y, z)$ at $x$, where $y$ is tangent to $\partial M$, $z$ is normal, and $\partial_z$ points into $M$.
Let $w$ be the $C^\infty$ function on $T_x \partial M$, defined near $0$, whose graph is $\partial M$, so $w$ has a double zero at $0$.
Given a $C^2$ function $v$ on $T_x \partial M$, let $Pv(y)$ be the inwards-pointing mean curvature of the graph of $v$ at $(y, v(y))$.
The PDE $Pv = 0$ satisfies the strong maximum principle for $C^2$ solutions, and by calculations as in \cite[\S6.1]{colding2011course}, one easily checks that the linearization of $P$ at $0$ on a ball $B := \{|y| < \varepsilon\}$ is
\begin{equation}\label{eqn: linearized minimal surface equation}
\Delta v + a \cdot \nabla^2 v + Qv = 0
\end{equation}
where $a$ is a $C^\infty$ matrix-valued function such that $\|a\|_{C^0} \lesssim \varepsilon^2$ and $Q$ is a first order linear differential operator whose coefficients have $C^0$ norm $\lesssim \varepsilon$.

We claim that if $\varepsilon$ is small enough, then for every $f \in C^0(\partial B)$ such that $\|f\|_{C^0} \lesssim \varepsilon^2$, there exists $v \in C^2(B)$ such that $Pv = 0$ and $v \restr \partial B = f$.
This is a standard argument which we just sketch.
Letting $C^\beta_w(B)$ denote suitably weighted H\"older spaces, it follows from the interior Schauder estimate \cite[Theorem 6.2]{gilbarg2015elliptic} that if $\varepsilon$ is small enough, then the linearization at $0$ of the smooth mapping
\begin{align*}
C^{2 + \alpha}_w(B) &\to C^\alpha_w(B) \oplus C^0(\partial B) \\
v &\mapsto (Pv, v \restr \partial B)
\end{align*}
is an isomorphism, where $\alpha > 0$ is suitably small.
So the claim holds by the inverse function theorem.

If $\{H_M > 0\}$ is not dense, then there is a nonempty relatively open set $U \subseteq \partial M$ on which $H_M \leq 0$.
If $H_M = 0$ on a relatively open subset of $U$, then on a possibly smaller open set, $M$ is perimeter-minimizing, so the mean curvature barrier condition fails.
Otherwise, we may assume that $H_M < 0$ on $U$ and work in normal coordinates $(y, z)$ based at some $x \in U$.
Using (\ref{eqn: linearized minimal surface equation}) we see that $\Delta w > 0$ near $0$, so by the maximum principle, $w < 0$ on a punctured ball $\{0 < |y| < \varepsilon\}$.
If $\varepsilon > 0$ is chosen small enough, there exists $v \in C^2(B)$ such that $Pv = 0$ on $B$ and $v \restr \partial B = 0$.
By the maximum principle, then, $v \geq w$ on $B$. 
Since $w$ has a double zero at $0$, (\ref{eqn: linearized minimal surface equation}) gives that if $\varepsilon$ was chosen small enough, then $P(-w) > 0$ on $B$, and again by the maximum principle, $v \leq -w$ on $B$.
Therefore the graph $\Sigma$ of $v$ is a minimal hypersurface in $N$ such that $x \in \Sigma$ and $\Sigma \setminus \{x\} \subset M$.
Minimal hypersurfaces are locally area-minimizing, so $\{z > v(y)\}$ is a counterexample to the mean curvature barrier condition.

Conversely, if the mean curvature barrier condition fails, there exists an open set $U \subseteq \overline M$, a perimeter-minimizing set $V \subseteq U \cap M$, and $x \in \partial V \cap \partial M$.
Again we work in normal coordinates based at $x$; if $\varepsilon$ is chosen small enough, then since $w$ has a double zero at $0$, there exists $v \in C^2(B)$ such that $Pv = 0$ on $B$ and $v = w$ on $\partial B$.
If $\partial V$ is the graph of $v$, then $v(0) = 0$, so by the maximum principle, either $v = w$ or $Pw(0) < 0$; either shows that $H_M \leq 0$ near $x$.
Otherwise, $\partial V$ is not the graph of $v$, so by the maximum principle for area-minimizing currents, \cite[Corollary 1]{Simon87}, the graph of $v$ does not intersect $\partial V$.
It follows that $v(0) < 0$, so by the maximum principle, there exists $y_* \in B$ such that $Pw(y_*) < 0$, so $\{H_M > 0\}$ cannot be dense.
\end{proof}

We now establish solvability of the Dirichlet problem for functions of least gradient.
The basic argument is due to Jerrard, Moradifam, and Nachman \cite{Jerrard18}, and was extended by G\'orny for discontinuous boundary data \cite{Górny2024}.
G\'orny's proof does not quite work when $\hat \alpha \neq 0$, because then there is not a good notion of relaxed total variation, but it turns out that the ideas of \cite{Jerrard18} allow us to patch this issue.

\begin{theorem}\label{thm: existence}
Assume the mean curvature barrier condition.
Let $\hat \alpha \in H^1(M, \RR)$, and let $f \in L^1_\loc(\partial \tilde M)$ be $\iota^* \hat \alpha$-equivariant.
If $f$ is continuous $\mathcal H^{d - 1}$-almost everywhere, then there is an $\hat \alpha$-equivariant function $u$ of least gradient such that the $L^\infty$ trace of $u$ is $f$.
\end{theorem}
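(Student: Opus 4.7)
The plan is to reduce the equivariant Dirichlet problem to a scalar problem with drift on the compact manifold $M$, and to then apply the level-set / direct-method strategy of Jerrard--Moradifam--Nachman \cite{Jerrard18} and G\'orny \cite{Górny2024} to the reduced problem.

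\emph{Reduction.} Pick a smooth closed $1$-form $\omega$ on $M$ with $[\omega] = \hat\alpha$ and an $\hat\alpha$-equivariant primitive $\sigma \in C^\infty(\tilde M)$ of its lift. Writing $u = v + \sigma$ identifies $\hat\alpha$-equivariant $BV_{\loc}(\tilde M)$ functions with $BV(M)$ functions, and identifies the least-gradient problem for $u$ with minimizing
\[
J_\omega(v) := \int_M |\dif v + \omega\,\dif V|
\]
(the total variation of the vector-valued measure $\dif v + \omega\,\dif V$) over $v \in BV(M)$ with boundary trace $g := (f - \sigma)|_{\partial\tilde M}$, which descends to $\partial M$ by equivariance and is continuous $\mathcal H^{d-1}$-a.e.

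\emph{Existence and trace identification.} Existence of a $J_\omega$-minimizer $v^*$ is by the direct method: $J_\omega$ is convex and $L^1(M)$-lower semicontinuous, $BV(M)$ embeds compactly into $L^1(M)$, and Theorem \ref{thm: Gagliardo} supplies an admissible competitor. To upgrade the $BV$ trace of $v^*$ to an $L^\infty$ trace equal to $g$, I would follow G\'orny's level-set strategy \cite[\S4]{Górny2024}. The key observations are: (i) since $\omega\,\dif V$ and $(\dif v)^s$ are mutually singular, on indicator functions $J_\omega(1_E) = P_M(E) + \int_M|\omega|\,\dif V$, so the drift adds only a harmless additive constant at the level of sets and, by a standard co-area / excision argument, the super-level sets $\{v^* > t\}$ are classical perimeter minimizers on $M$ with boundary traces $\{g > t\}$; and (ii) the mean curvature barrier condition, via the local maximum-principle / first-variation argument used in the proof of Theorem \ref{thm: mean curvature barrier condition interpretation}, rules out a boundary jump for each such level set at every continuity point of $g$, forcing $\mathcal H^{d-1}(\partial\{v^* > t\} \cap \partial M) = 0$. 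Combined with $\mathcal H^{d-1}$-a.e.\ continuity of $g$, this yields the $L^\infty$ trace identity $v^*|_{\partial M} = g$. Finally, $u := \tilde{v^*} + \sigma$ is $\hat\alpha$-equivariant, has least gradient on $\tilde M$, and has $L^\infty$ trace $f$.

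\emph{Main obstacle.} The substantive difficulty flagged by the author is that G\'orny's relaxed total variation $\int_M|\dif v| + \int_{\partial M}|v - g|$ has no direct equivariant analogue: the boundary penalty is a scalar quantity on $\partial \tilde M$ with no $\pi_1(M)$-invariant meaning. The smooth-form reduction is precisely what restores the descent. The remaining technical check --- that G\'orny's level-set / trace-identification argument passes through in the presence of the smooth drift $\omega$ --- is where ``the ideas of \cite{Jerrard18}'' intervene: by (i) above, the perimeter minimality of level sets is untouched by $\omega$, and by (ii), the maximum-principle / barrier construction is purely local and insensitive to smooth lower-order perturbations, so the classical exclusion of boundary jumps applies verbatim.
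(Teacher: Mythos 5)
Your reduction to the drifted functional $J_\omega(v) = \int_M |\dif v + \omega\,\dif V|$ on $M$ is a correct reformulation, but the key claim (i) --- that the super-level sets $\{v^* > t\}$ of a $J_\omega$-minimizer are \emph{classical} perimeter minimizers on $M$ --- does not hold, and this breaks the argument at exactly the step you need for trace identification. The Bombieri--De Giorgi--Giusti level-set principle rests on the coarea decomposition $\int|Dv| = \int_\RR P(\{v>t\})\,\dif t$, and this decomposition is destroyed by an absolutely continuous drift: the absolutely continuous part of $Dv^* + \omega\,\dif V$ contributes $\int_M |\nabla v^* + \omega|\,\dif V$, which cannot be sliced by levels of $v^*$. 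Your observation that $J_\omega(1_E) = P_M(E) + \int_M|\omega|$ on indicator functions is correct but does not propagate, because a general $BV$ minimizer has a nontrivial absolutely continuous part. Put differently, the sets whose perimeter minimality the theory \emph{does} give you are the super-level sets of the equivariant function $u = v^* + \sigma$ on $\tilde M$, and these are $\{v^* > t - \sigma\}$, which are not level sets of $v^*$ and do not descend to $M$. So the objects you feed into the barrier argument are the wrong ones.

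There is also a second, smaller problem: you minimize $J_\omega$ over $v \in BV(M)$ with the hard constraint $v\restr\partial M = g$, but this constraint set is not closed under the $L^1$ compactness you invoke, since the $BV$ trace is not $L^1(M)$-continuous. That is precisely why G\'orny relaxes the functional, and why the paper instead glues an exterior collar $N \setminus M$, extends $g$ to $f^\sharp \in W^{1,1}(\tilde N \setminus \tilde M)$ via Theorem~\ref{thm: Gagliardo}, and minimizes over $\hat\alpha$-equivariant $w \in BV_\loc(\tilde N)$ with $w = f^\sharp$ on $\tilde N \setminus \tilde M$; that class \emph{is} weak*-closed, the weak* limit $u$ is admissible, and then \cite[Theorem 2.4]{Jerrard18} is applied to the level sets of $u$ itself on $\tilde N$ (which are the correct objects), followed by the barrier argument at a putative bad boundary point. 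Your remark that a smooth-form reduction ``restores the descent'' identifies the right difficulty but proposes a fix that reintroduces it in a hidden form; the paper's fix is to stay on the cover and push the Dirichlet constraint into the extended domain.
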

\begin{proof}
Since $\overline M$ is a strong deformation retract of $N$, there is a canonical isomorphism $H^1(M, \RR) = H^1(N, \RR)$.
Let $h$ be an $\hat \alpha$-equivariant Lipschitz function on $\tilde N$.
By Theorem \ref{thm: Gagliardo} (applied to $M \setminus N$ rather than $M$), we can extend $f - h$ to a function $(f^\sharp - h) \in W^{1, 1}(N \setminus M)$ such that the $L^\infty$ trace of $f^\sharp$ is $f$.

Let us say that a function $w \in BV_\loc(\tilde N)$ is \dfn{admissible} if $w$ is $\hat \alpha$-equivariant and $w = f^\sharp$ on $\tilde N \setminus \tilde M$.
Let $(u_n)$ be a sequence of admissible functions such that $\int_M |\dif u_n| \dif V$ is approaching the infimum of $\int_M |\dif w| \dif V$ among all admissible functions.
The set of admissible functions is easily seen to be closed for the weakstar topology on $BV_\loc$, so after taking a subsequence we may assume that $u_n \weakto^* u$ for some admissible $u$.

Suppose towards contradiction that the $L^\infty$ trace of $u$ is not $f$.
By the contradiction assumption, up to a sign, there exist $x \in \partial \tilde M$ and $\delta > 0$ such that for every neighborhood $U$ of $x$,
$$\esssup_U u \geq f(x) + \delta.$$
Let $U$ be a sufficiently small neighborhood of $x$, so that $U$ witnesses the mean curvature barrier condition.
Since $u$ is admissible,
$$\{u > f(x) + \delta/2\} \setminus \tilde M = \emptyset.$$
By \cite[Theorem 2.4]{Jerrard18}, $\{u > f(x) + \delta/2\}$ is perimeter-minimizing, so by \cite[Lemma 4.2]{Górny2024} and the mean curvature barrier condition, $x$ is an interior point of $\tilde N \setminus \tilde M$, which is a contradiction.

If $v \in BV_\loc(\tilde M)$ is $\hat \alpha$-equivariant and $(u - v) \restr \partial \tilde M = 0$, then we can extend $v$ to an admissible function by setting $v \restr (\tilde N \setminus \tilde N) := f^\sharp$.
So by definition of $u$, $\int_M |\dif u| \dif V \leq \int_M |\dif v| \dif V$.
Therefore $u$ has least gradient.
\end{proof}

Next, we construct the dual vector field (or dual calibration) to a function of least gradient.
This is a special case of \cite[Theorem 4.10(1)]{Federer1974}, which itself is an easy corollary of the Hanh-Banach theorem.
We give a separate proof based on the $p$-Laplacian, which generalizes \cite{Mazon14,daskalopoulos2020transverse}.

\begin{lemma}\label{lma: Hanh Banach}
Let $\hat \alpha \in H^1(M, \RR)$, and let $f \in L^1_\loc(\partial \tilde M)$ be $\iota^* \hat \alpha$-equivariant.
Let $\mathcal C$ be the set of all $\hat \alpha$-equivariant functions $u \in BV_\loc(\tilde M)$ such that $u \restr \partial \tilde M = f$.
Then there exists $\gamma \in L^\infty(M, \Omega^{d - 1}_\cl)$ such that $\|\gamma\|_{L^\infty} \leq 1$ and for each $u \in \mathcal C$,
\begin{equation}\label{eqn: calibration}
\inf_{u^* \in \mathcal C} \int_M |\dif u^*| \dif V = \int_M \dif u \wedge \gamma.
\end{equation}
\end{lemma}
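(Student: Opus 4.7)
The plan is to realize $\gamma$ as a weak limit of Hodge duals of $p$-harmonic gradients, following the standard $p$-Laplacian regularization scheme for functions of least gradient. Fix $p > 1$ and let $u_p$ minimize the $p$-energy $E_p(u) := \int_M |\dif u|^p \dif V$ over $\mathcal C_p := \mathcal C \cap W^{1, p}_{\loc}(\tilde M)$; this set is closed under weak $W^{1, p}$-convergence and (after a preliminary approximation of $f$ by smoother boundary data) nonempty, so the direct method produces a minimizer. The Euler--Lagrange equation for $u_p$ descends to the closed-form equation $\dif(\star(|\dif u_p|^{p - 2}\dif u_p)) = 0$ on $M$, so writing $p' := p/(p - 1)$,
$$\gamma_p := \star(|\dif u_p|^{p - 2}\dif u_p) \in L^{p'}(M, \Omega^{d - 1}_\cl).$$
Integration by parts via Theorem \ref{thm: trace} then yields the tautological identity
$$\int_M \dif u \wedge \gamma_p = \int_M |\dif u_p|^p \dif V$$
for every $u \in \mathcal C_p$, since $u - u_p$ is a $W^{1, p}$ function on $M$ with vanishing trace.

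The next step is to bound $\gamma_p$ uniformly as $p \to 1^+$. I would fix a Lipschitz comparator $v \in \mathcal C$, constructed from a Lipschitz $\hat\alpha$-equivariant potential $h$ on $\tilde N$ and a Gagliardo extension (Theorem \ref{thm: Gagliardo}) of a regularization of $f - h \restr \partial \tilde M$. Testing the identity above against $v$ and using $|\gamma_p|^{p'} = |\dif u_p|^p$,
$$\|\gamma_p\|_{L^{p'}}^{p'} = \int_M \dif v \wedge \gamma_p \leq \|\dif v\|_{L^\infty} |M|^{1/p'} \|\gamma_p\|_{L^{p'}},$$
so $\|\gamma_p\|_{L^{p'}} \leq (\|\dif v\|_{L^\infty} |M|^{1/p'})^{1/(p'-1)} \to 1$ as $p \to 1^+$. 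H\"older then gives $\limsup_{p \to 1^+} \|\gamma_p\|_{L^q} \leq |M|^{1/q}$ for every $q \in [1, \infty)$. A diagonal extraction produces $p_n \to 1^+$ with $\gamma_{p_n} \weakto \gamma$ weakly in every $L^q$; the weak limit is closed, and the bounds $\|\gamma\|_{L^q} \leq |M|^{1/q}$ for all $q$ force $\|\gamma\|_{L^\infty} \leq 1$.

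Finally, I would match the pairing against $\gamma$ with $L_0 := \inf_{u^* \in \mathcal C} \int_M |\dif u^*| \dif V$ by showing $\int_M |\dif u_{p_n}|^{p_n} \dif V \to L_0$. The lower bound is immediate from H\"older: $L_0 \leq \int_M |\dif u_p| \dif V \leq (\int_M |\dif u_p|^p \dif V)^{1/p} |M|^{1/p'}$. For the upper bound, any smooth $\hat\alpha$-equivariant $w \in \mathcal C$ satisfies $\int_M |\dif u_p|^p \dif V \leq \int_M |\dif w|^p \dif V \to \int_M |\dif w| \dif V$ by dominated convergence; a trace- and equivariance-preserving strict mollification on a fundamental domain realizes $L_0$ as the infimum over such $w$. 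Combining with the identity from the first paragraph gives $\int_M \dif v \wedge \gamma = L_0$, and since $\gamma$ is closed, $\int_M \dif u \wedge \gamma$ is independent of the choice of $u \in \mathcal C$ by integration by parts on the difference (a $BV$ function on $M$ with vanishing trace).

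The principal obstacle is packaging the two approximation arguments: constructing the Lipschitz comparator $v$ from merely $L^1_\loc$ boundary data $f$, and performing the trace- and equivariance-preserving strict mollification of elements of $\mathcal C$ by smooth ones on the universal cover. Both are standard in spirit; I expect to handle them by combining an initial truncation of $f$ with an Anzellotti--Giaquinta-style mollification, applied $\pi_1(M)$-equivariantly on a fundamental domain. Notably, neither step invokes the mean curvature barrier condition, which governs existence of the minimizer itself rather than its certification by $\gamma$.
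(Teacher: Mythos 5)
Your plan is the same as the paper's: realize $\gamma$ as a weak limit of the conjugate $p'$-harmonic forms $\gamma_p = |\dif u_p|^{p-2}\star\dif u_p$ as $p \to 1^+$, exploiting the Euler--Lagrange identity, the closedness of $\gamma_p$, and an a priori $L^{p'}$ bound that degenerates to an $L^\infty$ bound in the limit. The structure of your argument matches the paper's Lipschitz-data case closely; the only cosmetic differences are that you use a Lipschitz comparator $v$ for the uniform bound where the paper uses the $2$-harmonic minimizer $u_2$, and you keep $|M|$ explicit rather than normalizing $\vol(M) = 1$.

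The gap you flag at the end --- ``packaging the two approximation arguments'' --- is real, and your proposed fix does not close it. For general $f \in L^1_\loc(\partial\tilde M)$ there is no Lipschitz element of $\mathcal C$, so the comparator giving $\|\gamma_p\|_{L^{p'}} \to 1$ is unavailable, and more fundamentally the $p$-energy minimizer $u_p \in \mathcal C_p$ may not exist because $\mathcal C \cap W^{1,p}_\loc$ can be empty (the trace $f$ may fail to lie in $W^{1-1/p,p}(\partial\tilde M)$). Your proposed repair --- Anzellotti--Giaquinta-type trace-preserving mollification --- produces approximants in $W^{1,1}$, not in $\bigcap_{p>1} W^{1,p}$, so they cannot serve as admissible competitors for the $p$-energy nor yield $\int_M |\dif w|^p \dif V \to \int_M |\dif w| \dif V$ by dominated convergence (which needs $|\dif w| \in L^p$ near $p=1$, i.e.\ some uniform integrability beyond $L^1$). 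The paper's resolution is to nest the approximations rather than interleave them: first run the $p$-Laplacian argument for Lipschitz boundary data $f_n$ (where everything is clean since Lipschitz extensions lie in every $W^{1,p}$), obtaining calibrations $\gamma_n$ with $\|\gamma_n\|_{L^\infty} \le 1$; then extract a weak-$*$ limit $\gamma_n \weakto^* \gamma$ in $L^\infty(M, \Omega^{d-1}_\cl)$; and finally pass to the limit in the identity using the inverse trace theorem (Theorem \ref{thm: Gagliardo}) to control $\int_{\partial M}(f_n - h)\gamma_n \to \int_{\partial M}(f - h)\gamma$ and to compare infima via $W^{1,1}$ extensions of $f_n - f$. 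If you restructure your argument along those lines --- approximate $f$ first, take the $p \to 1$ limit at each stage, then pass $n \to \infty$ on the calibrations --- the rest of what you wrote goes through.
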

\begin{proof}
Let us normalize $\vol(M) = 1$, and let $h$ be an $\hat \alpha$-equivariant function on $\tilde M$ which is Lipschitz up to the boundary.

We first assume that $f$ is Lipschitz, which case $f$ has a $\hat \alpha$-equivariant extension to $\overline M$ which is Lipschitz up to the boundary, and in particular is in $W^{1, p}$.
So for each $p > 1$, there exists an $\hat \alpha$-equivariant solution $u_p$ of the $p$-Laplacian, 
\begin{equation}\label{eqn: pLaplacian}
\dif(|\dif u_p|^{p - 2} \star \dif u_p) = 0,
\end{equation}
such that $u_p \restr \partial \tilde M = f$, and moreover $u_p$ minimizes $J_p(v) := \int_M |\dif v|^p \dif V$ subject to those constraints.
This is because $J_p$ is strictly convex and coercive on $W^{1, p}$ and its Euler-Lagrange equation is (\ref{eqn: pLaplacian}).\footnote{See \cite[Chapter 2]{lindqvist2019notes} for the usual reference on the $p$-Laplacian, or \cite[\S2.1]{daskalopoulos2020transverse} for more details in the equivariant setting.}

Let $p'$ be the solution of $1/p + 1/p' = 1$.
Following \cite[\S3.1]{daskalopoulos2020transverse}, let
$$\gamma_p := |\dif u_p|^{p - 2} \star \dif u_p$$
be the \dfn{conjugate $p'$-harmonic form} of $u_p$; by (\ref{eqn: pLaplacian}), $\dif \gamma_p = 0$.
Since $u_p$ minimizes $J_p$ and $p \leq 2$, we obtain by H\"older's inequality,
$$\int_M |\gamma_p|^{p'} \dif V = \int_M |\dif u_p|^p \dif V \leq \int_M |\dif u_2|^p \dif V \leq \left(\int_M |\dif u_2|^2 \dif V\right)^{p/2}.$$
Therefore for each $q \geq 2$, H\"older's inequality gives
$$\limsup_{p \to 1} \|\gamma_p\|_{L^q} \leq \limsup_{p \to 1} \|\gamma_p\|_{L^{p'}} \leq \limsup_{p \to 1} \left(\int_M |\dif u_2|^2 \dif V\right)^{\frac{p}{2p'}} \leq 1.$$
So by Alaoglu's theorem, there exists $\gamma$ such that $\gamma_p \weakto \gamma$ in $L^2(M, \Omega^{d - 1}_\cl)$ along a subsequence as $p \to 1$ and $\|\gamma\|_{L^\infty} \leq 1$.
Then
$$\inf_{u^* \in \mathcal C} \int_M |\dif u^*| \dif V \leq \liminf_{p \to 1} \int_M |\dif u_p| \dif V \leq \liminf_{p \to 1} \int_M |\dif u_p|^p \dif V = \liminf_{p \to 1} \int_M \dif u_p \wedge \gamma_p.$$
By the normal trace theorem \cite[\S2.4]{cessenat1996mathematical}, $\iota^* \gamma_p \weakto \iota^* \gamma$ in $W^{-1/2, 2}(\partial M, \Omega^{d - 1})$, the dual space of the fractional Sobolev space $W^{1/2, 2}(\partial M)$.
But $f - h$ is Lipschitz, and in particular is in $W^{1/2, 2}(\partial M)$, so
\begin{align*}
\liminf_{p \to 1} \int_M \dif u_p \wedge \gamma_p 
&= \liminf_{p \to 1} \int_{\partial M} (f - h) \gamma_p + \int_M \dif h \wedge \gamma_p \\
&= \int_{\partial M} (f - h) \gamma + \int_M \dif h \wedge \gamma \\
&= \int_M \dif u \wedge \gamma
\end{align*}
which proves one direction of (\ref{eqn: calibration}).
The other direction follows because $\|\gamma\|_{L^\infty} \leq 1$.

In the general case, choose $\iota^* \hat \alpha$-equivariant Lipschitz functions $f_n$ such that $f_n \to f$ in $L^1_\loc(\partial \tilde M)$.
Let $\mathcal C_n$ be the set of all $\hat \alpha$-equivariant functions $u_n \in BV_\loc(\tilde M)$ such that $u_n \restr \partial \tilde M = f_n$.
Since $f_n \in W^{1/2, 2}(\partial \tilde M)$, the above argument shows that there exists $\gamma_n \in L^\infty(M, \Omega^{d - 1}_\cl)$ such that $\|\gamma_n\|_{L^\infty} \leq 1$ and
$$\inf_{u_n^* \in \mathcal C_n} \int_M |\dif u_n^*| \dif V = \int_{\partial M} (f_n - h) \gamma_n + \int_M \dif h \wedge \gamma_n.$$
By Alaoglu's theorem, there exists $\gamma$ such that after taking a subsequence, $\gamma_n \weakto^* \gamma$ in $L^\infty(M, \Omega^{d - 1}_\cl)$.
In particular, $\|\gamma\|_{L^\infty} \leq 1$ and $\int_M \dif h \wedge \gamma_n \to \int_M \dif h \wedge \gamma$.
By Theorem \ref{thm: Gagliardo}, there exist $w_n, w \in W^{1, 1}(M)$, which extend $f_n - h, f - h$, such that $w_n \to w$ in $W^{1, 1}(M)$.
Therefore 
$$\int_{\partial M} (f - h) \gamma = \int_M \dif w \wedge \gamma = \lim_{n \to \infty} \int_M \dif w_n \wedge \gamma_n = \lim_{n \to \infty} \int_{\partial M} (f_n - h) \wedge \gamma_n.$$
Choose $u_n \in \mathcal C_n$ such that 
$$\int_M |\dif u_n| \dif V \leq \frac{1}{n} + \inf_{u_n^* \in \mathcal C_n} \int_M |\dif u_n^*| \dif V.$$
Then, since $\|\gamma\|_{L^\infty} \leq 1$,
$$\liminf_{n \to \infty} \int_M |\dif u_n| \dif V \leq \int_{\partial M} (f - h) \gamma + \int_M \dif h \wedge \gamma = \int_M \dif u \wedge \gamma \leq \inf_{u^* \in \mathcal C} \int_M |\dif u^*| \dif V.$$
By Theorem \ref{thm: Gagliardo}, there exists $v_n \in W^{1, 1}(M)$ such that $v_n \restr \partial \tilde M = f_n - f$ and
$$\int_M |\dif v_n| \dif V \lesssim \int_{\partial M} |f_n - f| \dif \mathcal H^{d - 1},$$
so, since $f_n - f \to 0$ in $L^1(\partial M)$,
\begin{align*}
\inf_{u^* \in \mathcal C} \int_M |\dif u^*| \dif V 
&\leq \liminf_{n \to \infty} \int_M |\dif u_n - \dif v_n| \dif V 
= \liminf_{n \to \infty} \int_M |\dif u_n| \dif V.
\end{align*}
The above inequalities collapse and prove (\ref{eqn: calibration}).
\end{proof}

\begin{proof}[Proof of Theorem \ref{thm: MFMC}]
Let $\beta := -\partial \alpha$.
Since $\partial S = 0$, there exists a function $f$ on $\partial \tilde M$ such that for every $\varphi \in C^\infty(\partial M, \Omega^{d - 2})$,
\begin{equation}\label{eqn: boundary condition satisfied}
\int_S \varphi = -\int_{\partial M} \dif f \wedge \varphi.
\end{equation}
Since $S$ has finite mass, $f \in BV_\loc(\tilde M)$, so $f$ is continuous $\mathcal H^{d - 1}$-almost everywhere \cite[Theorem 3.78]{Ambrosio2000}.
The definition (\ref{eqn: Lefschetz isomorphism}) of Lefschetz duality, and the fact that $[S] = [\partial C] = \partial \alpha$, implies that $f$ is $\hat \beta$-equivariant.
By Lemma \ref{lma: Lefschetz duality commutes with pullback}, $\iota^* \hat \alpha = -\hat \beta$, so by Theorems \ref{thm: mean curvature barrier condition interpretation} and \ref{thm: existence}, there exists an $\hat \alpha$-equivariant function $u$ of least gradient such that $u \restr \partial \tilde M = f$.

Let $\flux F^*$ be the $d - 1$-form given by Lemma \ref{lma: Hanh Banach}, let $C^*$ be the current such that for every $\psi \in C^\infty_\cpt(M, \Omega^{d - 1})$,
$$\int_{C^*} \psi = \int_M \dif u \wedge \psi.$$
Therefore by (\ref{eqn: boundary condition satisfied}) we have
\begin{equation}\label{eqn: mass is given by flux}
\mass(C^*) = \int_M |\dif u| \dif V = \int_M \langle \dif u, F^*\rangle \dif V = \int_{C^*} \flux F^*
\end{equation}
by (\ref{eqn: boundary current is trace}) we have for every $\varphi \in C^1(\partial M, \Omega^{d - 2})$,
$$\int_{\partial C^*} \varphi = \int_{\partial M} f \dif \varphi = -\int_{\partial M} \dif f \wedge \varphi = \int_S \varphi,$$
so $\partial C^* = S$, and $[C^*] = \alpha$.

If $C$ is another current such that $\partial C = S$ and $[C] = \alpha$, then by (\ref{eqn: mass is given by flux}),
$$\mass(C^*) = \int_{C^*} \flux F^* = \int_C \flux F^* \leq \mass(C).$$
Therefore $C^*$ minimizes its mass. 
Dually, for every $F \in \mathcal F$, we obtain by (\ref{eqn: mass is given by flux}) that
$$\int_{C^*} \flux F \leq \mass(C^*) = \int_{C^*} \flux F^*,$$
so $F^*$ realizes the maximum in (\ref{eqn: MFMC}).
\end{proof}

\section{Epilogue}\label{sec: epilogue}
\subsection{Anisotropic max flow/min cut theorem}
In many applications of functions of least gradient, the background geometry is not Riemannian, but is instead induced by an elliptic integrand.
To state the definition, let $\Omega^k_x$ denote the fiber of $\Omega^k$ at $x \in M$; if $\omega, \overline \omega \in \Omega^d_x$, $\omega \geq \overline \omega$ means that $\omega - \overline \omega$ is positively oriented.

\begin{definition}\label{dfn: elliptic integrand}
A \dfn{elliptic integrand} on $\overline M$ is a morphism of vector bundles $\phi: \Omega^1 \to \Omega^d$ such that for each $x \in N$, $\alpha, \beta \in \Omega^1_x$, and $t \in \RR$:
\begin{enumerate}
\item $\phi(\alpha) \geq 0$, and if $\phi(\alpha) = 0$, then $\alpha = 0$.
\item $\phi(\alpha) + \phi(\beta) \leq \phi(\alpha) + \phi(\beta)$.
\item $\phi(t\alpha) = |t| \phi(\alpha)$.
\end{enumerate}
\end{definition}

Thus an elliptic integrand induces an $L^1$ norm on measurable $1$-forms, so the notions of function of least gradient, mass-minimizing current, and mean curvature barrier condition still make sense.
By duality, an elliptic integrand induces a notion of $L^\infty$ norm on measurable $d - 1$-forms, so the definition of $\mathcal F$ also still makes sense.
Every theorem we have cited about functions of least gradient and the mean curvature barrier condition were proved only under the assumption that the background geometry is a $C^0$ elliptic integrand.
The theory of currents of finite mass, the proof of Theorem \ref{thm: existence}, and the proof of Lemma \ref{lma: Hanh Banach} which uses the Hanh-Banach theorem, all go through with minor changes.
We only used the strict mean convexity of $\partial M$ to prove the mean curvature barrier condition, and only used the fact that $S$ has finite mass to show that it has a primitive which is continuous $\mathcal H^{d - 1}$-almost everywhere.
We thus obtain a stronger version of Theorem \ref{thm: MFMC}:

\begin{theorem}
Let $\overline M = M \cup \partial M$ be a compact oriented manifold-with-boundary equipped with a $C^0$ elliptic integrand which satisfies the mean curvature barrier condition.
Let $\alpha \in H_{d - 1}((M, \partial M), \RR)$ and let $S$ be a closed $d - 2$-current in $\partial M$ such that $[S] = \partial \alpha$.

If $S$ admits a primitive in $L^1_\loc(\partial M)$ which is continuous $\mathcal H^{d - 1}$-almost everywhere, then there exists a $d - 1$-current $C^*$ which minimizes its mass among all currents $C$ such that $\partial C = S$ and $[C] = \alpha$, and
$$\mass(C^*) = \max_{F^* \in \mathcal F} \int_{C^*} \flux F^*.$$
\end{theorem}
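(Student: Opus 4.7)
The plan is to follow the proof of Theorem \ref{thm: MFMC} almost verbatim, with three small but essential substitutions to accommodate the anisotropic setting and the weaker hypotheses on $S$ and $\partial M$. In the Riemannian proof, the primitive $f$ of $S$ on $\partial \tilde M$ was obtained by integrating the finite-mass current $S$ and invoking \cite[Theorem 3.78]{Ambrosio2000} to guarantee continuity $\mathcal H^{d-1}$-almost everywhere; here $f$ is provided directly by hypothesis. Lifting $f$ to $\partial \tilde M$ and checking $\iota^* \hat\alpha$-equivariance of the lift requires only $[S] = \partial \alpha$ and Lemma \ref{lma: Lefschetz duality commutes with pullback}, both of which are purely topological and unchanged. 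Separately, the strict mean convexity of $\partial M$ entered the Riemannian proof only as input to Theorem \ref{thm: mean curvature barrier condition interpretation}; since the mean curvature barrier condition is now a direct hypothesis, that step is skipped.

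Next I would invoke the anisotropic analogues of Theorem \ref{thm: existence} and Lemma \ref{lma: Hanh Banach} to produce an $\hat\alpha$-equivariant function $u$ of least gradient (with respect to the $L^1$-norm induced by $\phi$) whose $L^\infty$ trace is $f$, together with a closed $d-1$-form $F^* \in L^\infty(M, \Omega^{d-1}_\cl)$ of $\phi$-dual comass at most $1$ satisfying
\[
\int_M \phi(\dif u) = \int_M \dif u \wedge F^*.
\]
For Theorem \ref{thm: existence}, the Jerrard--Moradifam--Nachman superlevel-set argument and G\'orny's inverse trace theorem \ref{thm: Gagliardo} use the background geometry only through the induced $L^1$/$L^\infty$ norms on $1$-forms, so they transfer with only bookkeeping changes. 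For Lemma \ref{lma: Hanh Banach}, however, the $p$-Laplacian proof given in the paper uses the Hodge star and the quadratic form $|\dif u|^2$, so it does not immediately survive anisotropy; instead I would run the Hahn--Banach argument of Federer \cite[Theorem 4.10(1)]{Federer1974}, dominating the infimum of anisotropic total variation on the affine space $\mathcal C$ of admissible traces by a sublinear functional on $W^{1,1}(M)$ and extending it to a continuous linear functional bounded by $\phi$, then extracting $F^*$ via Riesz representation. Closedness $\dif F^* = 0$ follows because the functional vanishes on $W^{1,1}_0(M)$-translates.

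Once $u$ and $F^*$ are in hand, I would define $C^*$ by $\int_{C^*} \psi := \int_M \dif u \wedge \psi$, so that (\ref{eqn: boundary current is trace}) and (\ref{eqn: boundary condition satisfied}) yield $\partial C^* = S$, and Lefschetz duality (via Lemma \ref{lma: Lefschetz duality commutes with pullback}) gives $[C^*] = \alpha$. The mass identity $\mass(C^*) = \int_{C^*} \flux F^*$ follows by pairing $\dif u$ against $F^*$ as in the Riemannian case. Mass minimality of $C^*$ and the max-flow duality then drop out of the one-sided inequality (\ref{eqn: one sided MFMC}), which is saturated by $F^*$.

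The main obstacle is conceptual rather than technical: one has to be careful that every step of the Riemannian proof depends on the metric only through the norms it induces on $1$- and $d-1$-forms, not through the full inner product. This is the case for the $BV_\loc(\tilde M)$ weakstar compactness used to extract minimizing sequences, for the mean curvature barrier argument ruling out boundary spikes in $\{u > f(x) + \delta/2\}$, and for the inverse trace theorem, since $\phi$ is equivalent to the Riemannian norm fiberwise and is lower semicontinuous under weakstar convergence. Only the $p$-Laplacian calibration step genuinely fails in the anisotropic setting, and the Hahn--Banach replacement described above repairs it.
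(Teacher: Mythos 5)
Your proposal is correct and takes essentially the same approach as the paper. The paper's argument is exactly the one you give: re-run the proof of Theorem \ref{thm: MFMC}, observe that the cited least-gradient results were already established for $C^0$ elliptic integrands, that strict mean convexity and finite mass of $S$ were used only to supply the hypotheses now assumed directly, and that the Hahn--Banach proof (rather than the $p$-Laplacian proof) of Lemma \ref{lma: Hanh Banach} is the one that transfers to the anisotropic setting.
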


\subsection{The Freedman--Headrick discretization}\label{sec: conjectures}
Freedman and Headrick, \cite[Appendix]{Freedman_2016}, have proposed to discretize $M$ by an undirected graph $(V, E)$.
Fix a pair of sufficiently small scales $\varepsilon_1 \ll \varepsilon_0$, let $V$ be the set of centers of $\varepsilon_1$-balls which form a maximal packing of $M$, and let $(x, y) \in E$ iff $||x - y| - \varepsilon_0| \ll \varepsilon_0$; thus for each $x \in V$, $\{y \in V: (x, y) \in E\}$ discretizes $\partial B(x, \varepsilon_0)$.
They conjectured that the maximal flow and minimal cut on $(V, E)$ converge as $\max(\varepsilon_0, \varepsilon_1/\varepsilon_0) \to 0$ to the continuum maximum flow and minimal cut.
I do not know how to prove this conjecture, but highlight it because it seems quite interesting, and would give a proof of Theorem \ref{thm: MFMC} which is constructive, in the sense that one could use the Fulkerson-Ford algorithm to construct area-minimizing hypersurfaces and their dual calibrations.

Classically, the discrete max flow/min cut theorem does not involve any sort of homology.
But suppose $(V, E)$ was obtained by discretizing $M$; by taking the persistent homology of $(V, E)$ at a sufficiently small scale $\gg \varepsilon_0$, presumably one recovers the homology of $M$.
I conjecture, therefore, that there is a version of the discrete max flow/min cut theorem in which the cut must be in a certain persistent homology class.

\subsection{Thurston's max flow/min cut theorem}
The following theorem grew out of Thurston's study of his asymmetric metric on Teichm\"uller space.

\begin{theorem}[{\cite{Thurston98,Gu_ritaud_2017}}]\label{thm: Thurston}
Let $M$ be a closed hyperbolic surface, and let $M'$ be a closed hyperbolic surface or $\Sph^1$.
Let $\rho$ be a homotopy class of maps $M \to M'$ and let $L$ be the least Lipschitz constant of a map in $\rho$.
If $L > 1$ or $M' = \Sph^1$, then there exists a measured geodesic lamination $\lambda$ in $M$ such that for every minimizing Lipschitz map $f$ in $\rho$, the pushforward current $f_* \lambda$ satisfies
$$\mass(f_* \lambda) = L \cdot \mass(\lambda).$$
\end{theorem}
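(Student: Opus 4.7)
The plan is to invoke Theorem \ref{thm: MFMC} via $L^1$-$L^\infty$ duality. The reduction is most direct when $M' = \Sph^1$, which I treat first; the case of $M'$ a hyperbolic surface lies beyond the framework as stated, and I describe the obstruction at the end.

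\textbf{Case $M' = \Sph^1$.} A homotopy class $\rho$ of smooth maps $M \to \Sph^1$ corresponds to a class $\hat\alpha \in H^1(M, \ZZ) \subseteq H^1(M, \RR)$: any smooth $f \in \rho$ yields $df$, a closed $1$-form representing $\hat\alpha$ with integer periods, and conversely every such closed $1$-form arises as $df$ for some $f \in \rho$. Since $\Lip(f) = \|df\|_{L^\infty}$,
\[
L = \inf\{\|\eta\|_{L^\infty} : \eta \in C^\infty(M, \Omega^1_\cl),\ [\eta] = \hat\alpha\}.
\]
The first step is to establish the dual identity
\[
L = \sup_{\beta \in H_1(M, \RR) \setminus \{0\}} \frac{\hat\alpha(\beta)}{\|\beta\|_1},
\]
where $\|\beta\|_1 := \min\{\mass(T) : [T] = \beta,\ \partial T = 0\}$ is the \emph{stable norm}, well-defined and attained by Theorem \ref{thm: MFMC} applied with $S = 0$. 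The inequality $\leq$ is the calibration inequality. The inequality $\geq$ also follows from Theorem \ref{thm: MFMC}: applied to any $\beta$, it identifies $\|\beta\|_1$ as the support functional of the convex set $\{\flux F : F \in \mathcal F\}$ of closed $1$-forms with $L^\infty$ norm $\leq 1$, so by bidual duality in the finite-dimensional pair $(H^1(M, \RR), H_1(M, \RR))$, the stable norm and the $L^\infty$-quotient norm $\|\gamma\|_{L^\infty} := \inf_{\eta \in \gamma} \|\eta\|_{L^\infty}$ are mutually dual; applied to $\gamma = \hat\alpha$, this gives the displayed identity.

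Since $H_1(M, \RR)$ is finite-dimensional and $\hat\alpha(\beta)/\|\beta\|_1$ is $1$-homogeneous and continuous, the supremum is attained at some $\beta^*$, which we rescale so $\|\beta^*\|_1 = 1$ and $\hat\alpha(\beta^*) = L$. Apply Theorem \ref{thm: MFMC} to $\alpha^* := \beta^*$ (with $S = 0$): this yields a mass-minimizing current $C^*$ with $[C^*] = \beta^*$ and $\mass(C^*) = 1$, which by Corollary \ref{crly: lamination MFMC} (since $d = 2$) is a measured geodesic lamination $\lambda^*$. For any $L$-Lipschitz $f \in \rho$, the Lipschitz pushforward bound for $1$-currents gives $\mass(f_*\lambda^*) \leq L \cdot \mass(\lambda^*) = L$. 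For the matching lower bound, pair $f_*\lambda^*$ with the canonical $1$-form $d\theta$ on $\Sph^1$ (for which $\|d\theta\|_{L^\infty} = 1$):
\[
\mass(f_*\lambda^*) \geq \left|\int_{f_*\lambda^*} d\theta\right| = \left|\int_{\lambda^*} df\right| = |\hat\alpha(\beta^*)| = L.
\]
Hence $\mass(f_*\lambda^*) = L = L \cdot \mass(\lambda^*)$, as required.

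\textbf{Main obstacle: $M'$ hyperbolic.} Here $f: M \to M'$ cannot be encoded as a closed scalar $1$-form, and the relevant data is a representation $\pi_1(M) \to \mathrm{PSL}_2(\RR)$. A natural plan is to lift to universal covers, consider $\tilde f: \tilde M \to \Hyp^2$, and seek a calibration from the pullback of the hyperbolic Finsler structure; the normalized lift $\tilde f/L$ should be $1$-Lipschitz and play the role of a calibrated primitive analogous to the function of least gradient $u$ constructed in Theorem \ref{thm: existence}. However, realizing this within the MFMC framework requires an $\Hyp^2$-valued equivariant $BV$ theory together with a non-abelian refinement of Lemma \ref{lma: Hanh Banach}---precisely the Daskalopoulos-Uhlenbeck type generalization to which the author alludes in the introduction---and this is the core obstacle, explaining why in this case the hypothesis $L > 1$ becomes essential to rule out degenerations to the trivial class.
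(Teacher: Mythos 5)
Your proposal is correct and follows essentially the same route as the paper's proof of the $M' = \Sph^1$ case: identify $\rho$ with a class in $H^1(M,\RR)$, invoke the max flow/min cut duality (via Lemma~\ref{lma: Hanh Banach} in the paper, via Theorem~\ref{thm: MFMC} directly in your version) to realize $L$ as the dual stable norm, pick a homology class $\alpha$ achieving the duality pairing, and take $\lambda$ to be the mass-minimizing lamination from Corollary~\ref{crly: lamination MFMC}. Your write-up is somewhat more explicit than the paper's in spelling out both sides of the final inequality $\mass(f_*\lambda) = L\,\mass(\lambda)$ (the Lipschitz pushforward bound and the pairing with $d\theta$), which the paper leaves implicit in the statement that $F^* = \nabla^\perp f$ attains the maximum in (\ref{eqn: MFMC}).
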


Thurston conjectured that Theorem \ref{thm: Thurston} should have ``a simpler proof based on more general principles -- in particular, the max flow min cut principle, convexity, and $L^0 \leftrightarrow L^\infty$ duality'' which ``fits into a context including $L^p$ comparisons'' \cite[Abstract]{Thurston98}.
Daskalopoulos and Uhlenbeck, \cite{daskalopoulos2020transverse,daskalopoulos2022,daskalopoulos2023}, have recently given a proof of Theorem \ref{thm: Thurston} which arguably meets all of Thurston's criteria, except possibly for the use of the max flow/min cut theorem.
But their proof directly inspired our proof of Lemma \ref{lma: Hanh Banach}, so arguably we see that their proof satisfies \emph{all} of Thurston's criteria.
In fact, the $M' = \Sph^1$ case of Theorem \ref{thm: Thurston} is a special case of Corollary \ref{crly: lamination MFMC}:

\begin{proof}[Proof when $M' = \Sph^1$]
The \dfn{stable norm} $\|\alpha\|_1$ of a homology class $\alpha \in H_1(M, \RR)$ is the infimum of $\len(\gamma)$, among all loops $\gamma$ in $\alpha$.
We identify a homotopy class of maps $M \to \Sph^1$ with its pushforward $H_1(M, \RR) \to H_1(\Sph^1, \RR) = \RR$; thus we have an isomorphism $[M, \Sph^1] \cong H^1(M, \RR)$.
By Lemma \ref{lma: Hanh Banach}, the dual norm $\|\rho\|_\infty$ of the stable norm is the infimum of $\|\psi\|_{L^\infty}$ among all $1$-forms $\psi$ in $\rho \in H^1(M, \RR)$.
If we normalize $\|\rho\|_\infty = 1$, then we can find $\alpha$ such that
\begin{equation}\label{eqn: stable duality}
\|\alpha\|_1 = \langle \rho, \alpha\rangle = 1.
\end{equation}
Then $\lambda$ is the lamination given by Corollary \ref{crly: lamination MFMC}, and for every minimizing Lipschitz map $f$ in the homotopy class $\rho$, (\ref{eqn: stable duality}) implies that $F^* := \nabla^\perp f$ satisfies (\ref{eqn: MFMC}).
\end{proof}

In \cite{BackusBest2} we study the stable norm extensively.
The $M' = \Sph^1$ case of Theorem \ref{thm: Thurston} is also a special case of the main theorem of that paper.


\printbibliography

@article{BackusCML,
author={Backus, Aidan},
title={Minimal Laminations and Level Sets of 1-Harmonic Functions},
journal={The Journal of Geometric Analysis},
year={2024},
month={8},
day={08},
volume={34},
number={10},
pages={309},
abstract={We collect several results concerning regularity of minimal laminations, and governing the various modes of convergence for sequences of minimal laminations. We then apply this theory to prove that a function has locally least gradient (is 1-harmonic) iff its level sets are a minimal lamination; this resolves an open problem of Daskalopoulos and Uhlenbeck.},
issn={1559-002X},
doi={10.1007/s12220-024-01758-8},
url={https://doi.org/10.1007/s12220-024-01758-8}
}

@misc{BackusBest2,
      title={The canonical lamination calibrated by a cohomology class}, 
      author={Aidan Backus},
      year={2024},
      eprint={2412.00255},
      archivePrefix={arXiv},
      primaryClass={math.DG},
      url={https://arxiv.org/abs/2412.00255}, 
}

@article{Strang1983,
author={Strang, Gilbert},
title={Maximal flow through a domain},
journal={Mathematical Programming},
year={1983},
month={06},
day={01},
volume={26},
number={2},
pages={123-143},
issn={1436-4646},
doi={10.1007/BF02592050},
url={https://doi.org/10.1007/BF02592050}
}

@article{Nozawa90,
author = {Ryōhei Nozawa},
title = {{Max-flow min-cut theorem in an anisotropic network}},
volume = {27},
journal = {Osaka Journal of Mathematics},
number = {4},
publisher = {Osaka University and Osaka Metropolitan University, Departments of Mathematics},
pages = {805 -- 842},
year = {1990},
}

@article{Gu_ritaud_2017,
	doi = {10.2140/gt.2017.21.693},
  
	url = {https://doi.org/10.2140%2Fgt.2017.21.693},
  
	year = 2017,
  
	publisher = {Mathematical Sciences Publishers},
  
	volume = {21},
  
	number = {2},
  
	pages = {693--840},
  
	author = {Fran{\c{c}
}ois Gu{\'{e}}ritaud and Fanny Kassel},
  
	title = {Maximally stretched laminations on geometrically finite hyperbolic manifolds},
  
	journal = {Geometry and Topology}
}

@article{Grieser05,
author = {Grieser, Daniel},
year = {2005},
month = {07},
pages = {},
title = {The first eigenvalue of the Laplacian, isoperimetric constants, and the Max Flow Min Cut Theorem},
volume = {87},
journal = {Archiv der Mathematik},
doi = {10.1007/s00013-005-1623-4}
}

@book{gilbarg2015elliptic,
  title={Elliptic Partial Differential Equations of Second Order},
  author={Gilbarg, D. and Trudinger, N.S.},
  isbn={9783642617980},
  series={Classics in Mathematics},
  url={https://books.google.com/books?id=l9L6CAAAQBAJ},
  year={2015},
  publisher={Springer Berlin Heidelberg}
}

@book{gorny2024leastgradient,
  title={Functions of Least Gradient},
  author={Wojciech Górny and José M. Mazón},
  isbn={978-3-031-51880-5},
  series={Monographs in Mathematics},
  year={2024},
  publisher={Birkhäuser Cham}
}

@article{Anzellotti1983,
author={Anzellotti, Gabriele},
title={Pairings between measures and bounded functions and compensated compactness},
journal={Annali di Matematica Pura ed Applicata},
year={1983},
month={12},
day={01},
volume={135},
number={1},
pages={293-318},
issn={1618-1891},
doi={10.1007/BF01781073},
url={https://doi.org/10.1007/BF01781073}
}

@book{simon1983GMT,
  title={Lectures on Geometric Measure Theory},
  author={Simon, L.},
  isbn={9780867844290},
  year={1983},
  series={Proceedings of the Centre for Mathematical Analysis},
  publisher={Centre for Mathematical Analysis, Australian National University}
}

@book{papadimitriou1982combinatorial,
  title={Combinatorial Optimization: Algorithms and Complexity},
  author={Papadimitriou, C.H. and Steiglitz, K.},
  isbn={9780131524620},
  lccn={81005866},
  url={https://books.google.com/books?id=DvBQAAAAMAAJ},
  year={1982},
  publisher={Prentice Hall}
}

@article{Górny2024,
author={Górny, Wojciech},
title={Least gradient problem with Dirichlet condition imposed on a part of the boundary},
journal={Calculus of Variations and Partial Differential Equations},
year={2024},
month={2},
day={12},
volume={63},
number={2},
pages={58},
issn={1432-0835},
doi={10.1007/s00526-023-02646-9},
}

@article{Spradlin14,
 ISSN = {00222518, 19435258},
 URL = {http://www.jstor.org/stable/24904268},
 abstract = {We provide an example of an L1 function on the unit circle that cannot be the trace of a function of bounded variation of least gradient in the unit disk.},
 author = {Gregory S. Spradlin and Alexandru Tamasan},
 journal = {Indiana University Mathematics Journal},
 number = {6},
 pages = {1819--1837},
 publisher = {Indiana University Mathematics Department},
 title = {Not All Traces on the Circle Come from Functions of Least Gradient in the Disk},
 urldate = {2023-11-17},
 volume = {63},
 year = {2014}
}

@book{cessenat1996mathematical,
  title={Mathematical Methods In Electromagnetism: Linear Theory And Applications},
  author={Cessenat, M.},
  isbn={9789814525381},
  series={Series On Advances In Mathematics For Applied Sciences},
  url={https://books.google.com/books?id=kZTsCgAAQBAJ},
  year={1996},
  publisher={World Scientific Publishing Company}
}

@book{colding2011course,
  title={A Course in Minimal Surfaces},
  author={Colding, T.H. and Minicozzi, W.P.},
  isbn={9780821853238},
  lccn={2010044373},
  series={Graduate studies in mathematics},
  url={https://books.google.com/books?id=k4ODAwAAQBAJ},
  year={2011},
  publisher={American Mathematical Society}
}

@article{Mazon14,
 ISSN = {00222518, 19435258},
 URL = {http://www.jstor.org/stable/24904253},
 author = {José M. Mazón and Julio D. Rossi and Sergio Segura de León},
 journal = {Indiana University Mathematics Journal},
 number = {4},
 pages = {1067--1084},
 publisher = {Indiana University Mathematics Department},
 title = {Functions of Least Gradient and 1-Harmonic Functions},
 urldate = {2023-01-25},
 volume = {63},
 year = {2014}
}

@misc{Thurston98,
  doi = {10.48550/ARXIV.MATH/9801039},
  
  url = {https://arxiv.org/abs/math/9801039},
  
  author = {Thurston, William P.},
  
  keywords = {Geometric Topology (math.GT), Differential Geometry (math.DG), FOS: Mathematics, FOS: Mathematics, 57m50},
  
  title = {Minimal stretch maps between hyperbolic surfaces},
  
  publisher = {arXiv},
  
  year = {1998},
  
  copyright = {Assumed arXiv.org perpetual, non-exclusive license to distribute this article for submissions made before January 2004}
}

@article{daskalopoulos2020transverse,
author = {Georgios Daskalopoulos and Karen Uhlenbeck},
title = {{Transverse measures and best Lipschitz and least gradient maps}},
volume = {127},
journal = {Journal of Differential Geometry},
number = {3},
publisher = {Lehigh University},
pages = {969 -- 1018},
year = {2024},
doi = {10.4310/jdg/1721071495},
URL = {https://doi.org/10.4310/jdg/1721071495}
}

@misc{daskalopoulos2022,
  doi = {10.48550/ARXIV.2205.08250},
  
  url = {https://arxiv.org/abs/2205.08250},
  
  author = {Daskalopoulos, Georgios and Uhlenbeck, Karen},
  
  keywords = {Differential Geometry (math.DG), FOS: Mathematics, FOS: Mathematics},
  
  title = {Analytic properties of Stretch maps and geodesic laminations},
  
  publisher = {arXiv},
  
  year = {2022},
  month={5},
  day={17},
  
  copyright = {arXiv.org perpetual, non-exclusive license}
}

@misc{daskalopoulos2023,
      title={Best Lipschitz maps and Earthquakes}, 
      author={Georgios Daskalopoulos and Karen Uhlenbeck},
      year={2024},
      eprint={2410.08296},
      archivePrefix={arXiv},
      primaryClass={math.DG},
      url={https://arxiv.org/abs/2410.08296}, 
}

@book{lindqvist2019notes,
  title={Notes on the Stationary p-Laplace Equation},
  author={Lindqvist, P.},
  isbn={9783030145019},
  series={SpringerBriefs in Mathematics},
  url={https://books.google.com/books?id=YWqVDwAAQBAJ},
  year={2019},
  publisher={Springer International Publishing}
}

@article{Harvey82,
author = {Reese Harvey and H. Blaine Lawson},
title = {{Calibrated geometries}},
volume = {148},
journal = {Acta Mathematica},
number = {none},
publisher = {Institut Mittag-Leffler},
pages = {47 -- 157},
year = {1982},
doi = {10.1007/BF02392726},
URL = {https://doi.org/10.1007/BF02392726}
}

@book{Giusti77,
  title={Minimal Surfaces and Functions of Bounded Variation},
  author={E. Giusti},
  isbn={9780817631536},
  lccn={lc84011195},
  series={Monographs in Mathematics},
  url={https://books.google.com/books?id=dNgsmArDoeQC},
  year={1984},
  publisher={Birkh{\"a}user Boston}
}

@article{Freedman_2016,
	doi = {10.1007/s00220-016-2796-3},
  
	url = {https://doi.org/10.1007%2Fs00220-016-2796-3},
  
	year = 2016,
	month = 11,
  
	publisher = {Springer Science and Business Media {LLC}
},
  
	volume = {352},
  
	number = {1},
  
	pages = {407--438},
  
	author = {Michael Freedman and Matthew Headrick},
  
	title = {Bit Threads and Holographic Entanglement},
  
	journal = {Communications in Mathematical Physics}
}

@book{hörmander2007analysis,
  title={The Analysis of Linear Partial Differential Operators III: Pseudo-Differential Operators},
  author={H{\"o}rmander, L.},
  isbn={9783540499374},
  lccn={89026134},
  series={Classics in Mathematics},
  url={https://books.google.com/books?id=b55IBO0FaOYC},
  year={2007},
  publisher={Springer Berlin Heidelberg}
}

@article{Federer1974,
 ISSN = {00222518, 19435258},
 URL = {http://www.jstor.org/stable/24890827},
 author = {Herbert Federer},
 journal = {Indiana University Mathematics Journal},
 number = {4},
 pages = {351--407},
 publisher = {Indiana University Mathematics Department},
 title = {Real Flat Chains, Cochains and Variational Problems},
 urldate = {2023-05-19},
 volume = {24},
 year = {1974}
}

@book{sullivan1990crystalline,
  title={A Crystalline Approximation Theorem for Hypersurfaces},
  author={Sullivan, J.M.},
  url={https://books.google.com/books?id=I-WuYgEACAAJ},
  year={1990},
  publisher={Princeton University}
}

@article{Simon87,
author = {Leon Simon},
title = {{A strict maximum principle for area minimizing hypersurfaces}},
volume = {26},
journal = {Journal of Differential Geometry},
number = {2},
publisher = {Lehigh University},
pages = {327 -- 335},
year = {1987},
doi = {10.4310/jdg/1214441373},
URL = {https://doi.org/10.4310/jdg/1214441373}
}

@article{Jerrard18,
url = {https://doi.org/10.1515/crelle-2014-0151},
title = {Existence and uniqueness of minimizers of general least gradient problems},
author = {Robert L. Jerrard and Amir Moradifam and Adrian I. Nachman},
pages = {71--97},
volume = {2018},
number = {734},
journal = {Journal für die reine und angewandte Mathematik (Crelles Journal)},
doi = {doi:10.1515/crelle-2014-0151},
year = {2018},
lastchecked = {2024-11-01}
}

@book{Ambrosio2000,
    author = {Ambrosio, Luigi and Fusco, Nicola and Pallara, Diego},
    title = {Functions of Bounded Variation and Free Discontinuity Problems},
    publisher = {Oxford University Press},
    year = {2000},
    month = {03},
    isbn = {9780198502456},
    doi = {10.1093/oso/9780198502456.001.0001},
    url = {https://doi.org/10.1093/oso/9780198502456.001.0001},
}

@article{Dweik2019,
author={Dweik, Samer
and Santambrogio, Filippo},
title={$L^p$ bounds for boundary-to-boundary transport densities, and $W^{1, p}$ bounds for the BV least gradient problem in 2D},
journal={Calculus of Variations and Partial Differential Equations},
year={2019},
month={1},
day={04},
volume={58},
number={1},
pages={31},
issn={1432-0835},
doi={10.1007/s00526-018-1474-z},
url={https://doi.org/10.1007/s00526-018-1474-z}
}

\end{document}